\documentclass[oneside,reqno,english]{amsart}
\usepackage[T1]{fontenc}
\usepackage[utf8]{inputenc}
\usepackage{xcolor}
\usepackage{babel}
\usepackage{prettyref}
\usepackage{amstext}
\usepackage{amsthm}
\usepackage{amssymb}
\usepackage[pdfusetitle,
 bookmarks=true,bookmarksnumbered=false,bookmarksopen=false,
 breaklinks=false,pdfborder={0 0 0},pdfborderstyle={},backref=false,colorlinks=false]
 {hyperref}
\hypersetup{
 colorlinks=true,citecolor=blue,linkcolor=blue,linktocpage=true}

\makeatletter
\numberwithin{equation}{section}
\numberwithin{figure}{section}

\usepackage{prettyref}

\newrefformat{cor}{Corollary~\ref{#1}}
\newrefformat{subsec}{Section~\ref{#1}}
\newrefformat{lem}{Lemma~\ref{#1}}
\newrefformat{thm}{Theorem~\ref{#1}}
\newrefformat{sec}{Section~\ref{#1}}
\newrefformat{chap}{Chapter~\ref{#1}}
\newrefformat{prop}{Proposition~\ref{#1}}
\newrefformat{exa}{Example~\ref{#1}}
\newrefformat{tab}{Table~\ref{#1}}
\newrefformat{rem}{Remark~\ref{#1}}
\newrefformat{def}{Definition~\ref{#1}}
\newrefformat{fig}{Figure~\ref{#1}}
\newrefformat{claim}{Claim~\ref{#1}}
\newrefformat{assu}{Assumption~\ref{#1}}

\makeatother

\theoremstyle{plain}
\newtheorem{thm}{\protect\theoremname}[section]
\newtheorem{prop}[thm]{\protect\propositionname}
\theoremstyle{definition}
\newtheorem{example}[thm]{\protect\examplename}
\theoremstyle{plain}
\newtheorem{cor}[thm]{\protect\corollaryname}
\theoremstyle{remark}
\newtheorem{rem}[thm]{\protect\remarkname}
\theoremstyle{definition}
\newtheorem{defn}[thm]{\protect\definitionname}
\providecommand{\corollaryname}{Corollary}
\providecommand{\definitionname}{Definition}
\providecommand{\examplename}{Example}
\providecommand{\propositionname}{Proposition}
\providecommand{\remarkname}{Remark}
\providecommand{\theoremname}{Theorem}

\begin{document}
\subjclass[2020]{Primary 81P15; Secondary 47A20, 47B65, 81P45.}
\title[]{Ordered POVMs and Residual Collapse}
\begin{abstract}
Ordered realizations of discrete POVMs are studied through a residual
transform generated by sequential tests. One application of the transform
replaces each coordinate by the effect obtained after all earlier
tests have failed, and appends the remaining mass as a terminal outcome.
Under natural hypotheses, iterating the transform produces a collapsed
POVM whose non-escape coordinates are the parts of the original effects
that survive all earlier tests. The resulting collapse map gives an
equivalence relation on ordered POVM realizations. Its range and fibers
are characterized. The range consists of collapsed POVMs, whose non-escape
coordinates are mutually orthogonal and whose support projections
strongly sum to the identity. The fiber over a collapsed POVM consists
of all ordered realizations with the same residually visible compressions.
In particular, different ordered realizations, including ones with
different off-diagonal coupling data, can have the same collapsed
image. After collapse, the non-escape coordinates are fixed under
further residual iteration. The remaining dynamics takes place in
the escape effect, which is fragmented by a universal scalar functional
calculus.
\end{abstract}

\author{James Tian}
\address{(James Tian) Mathematical Reviews, 535 W. William St, Suite 210, Ann
Arbor, MI 48103, USA}
\email{james.ftian@gmail.com}
\keywords{Sequential POVMs; Naimark dilation; residual transforms; tail filtrations;
collapsed POVMs; quantum effects; sequential measurements; operator-valued
measures.}

\maketitle
\tableofcontents{}

\section{Introduction}\label{sec:1}

A positive operator-valued measure (POVM) describes the effects of
a quantum measurement. It gives the probabilities of the possible
outcomes, but it does not determine how those outcomes are produced
by an ordered measurement procedure. One may test for one output,
update the part that remains, test for the next output, and continue.
The order of these tests is extra structure, and different ordered
procedures may give the same final POVM.

The subject here is the residual structure of an ordered POVM. At
each stage, the next effect is applied to the part left by the earlier
tests. The residual part carries the information available to later
tests. Thus an ordered POVM is not only a family of effects, but a
family of effects equipped with an order of residual decisions.

The main construction is a residual transform $\Psi$. It sends an
ordered POVM to the POVM obtained by applying its entries as sequential
residual tests. Iterating $\Psi$ removes from each original coordinate
the part seen by earlier tests. Under natural hypotheses, the original
coordinates converge to a collapsed image. This image keeps the part
of each effect that survives all earlier tests and places the remaining
mass in an escape effect.

The collapse map partitions ordered POVMs into equivalence classes.
Two ordered realizations are equivalent when they have the same collapsed
image. The fiber over a collapsed POVM describes the internal freedom
that is not visible after residual collapse. In particular, ordered
quantum realizations with different off-diagonal coupling data can
have the same collapsed image.

This construction has a natural interpretation in quantum information.
A POVM gives the statistics of a quantum observable, while an ordered
measurement procedure may contain additional information about how
the observable is implemented. Sequential testing and conditioning
on earlier negative outcomes are standard features of measurement
processes, and the residual collapse studied here gives a mathematical
model for how internal coupling data can disappear under a coarser
residual description. The collapse map separates the part of an ordered
measurement that remains visible under repeated residual testing from
the part that belongs only to the chosen realization.

POVMs and instruments are standard models for quantum measurements
and measurement processes; see, for example, \cite{MR337245,MR263379,MR728889,MR785139,MR2797301}.
Sequential measurements and sequential products of effects have been
studied from several viewpoints, including operational models and
effect algebras \cite{MR1861337,MR1899078,MR2374822,MR2421899,MR2812349}.
Dilation theory for POVMs goes back to Naimark and Stinespring and
has been developed further in operator-valued measure and frame settings
\cite{MR10789,MR69403,MR2074214,MR3186831,MR3778680,MR4137283,MR3612170}.
There is also a substantial literature on joint measurability, compatibility,
simulability, convex structure, extremality, and realization of quantum
observables \cite{MR2465277,MR3465323,MR3702666,MR4571417,MR2105199,MR2081649,MR2167957,MR2605024,MR2770378,MR4112227,MR4340930,MR2165832,MR2513963}.
The present work uses a different organizing principle. It fixes an
order of residual tests, follows the residual operators through the
minimal dilation, and studies the collapse map and its fibers.

The rest of the paper develops this separation at three levels. First,
the residual process is placed inside the minimal dilation. Next,
the iteration of $\Psi$ is used to form the collapsed image. Finally,
the range, fibers, and post-collapse dynamics of the collapse map
are described.

\prettyref{sec:2} develops the local dilation picture for an ordered
residual process. The minimal Naimark dilation is used not only to
orthogonalize the resulting POVM, but also to track the residual tail
spaces determined by the order of the tests. This gives finite-dimensional
rank and block formulas for the difference between a residual update
and ordinary intersection with the next tail. These formulas measure
the extra tail directions created by non-sharp residual decisions
and the off-diagonal terms created when several outputs are compressed
to the same residual space.

\prettyref{sec:3} studies the iteration of $\Psi$. In the commuting
case, and more generally under a closed-range condition on the ordered
kernel filtration, the original coordinates converge to the collapsed
image. The non-escape coordinates are the parts of the original effects
that survive all earlier residual tests.

\prettyref{sec:4} characterizes the range and fibers of the collapse
map. The range consists of collapsed POVMs, whose non-escape coordinates
are mutually orthogonal and commute with the escape effect. The fiber
over a collapsed POVM consists of the ordered realizations with the
same residually visible data. This gives the equivalence classes of
ordered POVMs.

\prettyref{sec:5} describes the dynamics after collapse. The non-escape
coordinates remain fixed under further residual iteration. The escape
effect is split into later terminal coordinates according to a universal
scalar functional calculus.

Taken together, these results show that residual collapse is not only
a limiting procedure. It gives a natural equivalence relation on ordered
POVMs, separating the part of an ordered measurement visible to residual
testing from the internal realization data lost under collapse.

\section{Sequential residual realizations}\label{sec:2}

We begin with the ordered recursion used throughout the paper. A sequence
of positive contractions $\left(A_{n}\right)_{n\ge1}$ is applied
one step at a time. At step $n$, the contraction $A_{n}$ acts on
the part left by the previous steps. It produces the effect $T_{n}$
and leaves the new remainder $R_{n}$ as in \prettyref{eq:2-1}. Thus
the order of the contractions is part of the structure. It determines
which part of the space is available at each later step.

We also note the converse direction. Every discrete POVM admits such
a residual factorization, so the factorization itself is not restrictive
(\prettyref{prop:2-2}). The ordered content lies in the step-by-step
use of the chosen contractions on the successive remainders.

The first result (\prettyref{thm:2-1}) places this recursion in the
minimal Naimark dilation of the POVM $\left(T_{n}\right)$. The effects
$T_{n}$ become coordinate projections in the dilation space, while
the operators $R_{n}$ become compressions of tail projections. This
tail picture then lets us compare two ways of passing to the next
stage, projecting the current space into the next tail and intersecting
it with that tail (\prettyref{subsec:2-3}).

In finite dimension, this comparison gives rank formulas for the part
created by projection into the tail rather than by intersection (\prettyref{thm:2-4}).
For blocks of outputs, the same computation gives off-diagonal compression
terms, since several orthogonal coordinate spaces are compressed to
the same current space (\prettyref{thm:2-6}).

The dilation result below is standard \cite{MR10789,MR2074214,MR3186831,MR4137283}.
What we use is its compatibility with the residual chain and the resulting
tail space computations.

\subsection{The residual recursion and its dilation}

Let $H$ be a complex Hilbert space, and let $\left(A_{n}\right)_{n\ge1}$
be a sequence of positive contractions on $H$. Define operators $\left(R_{n}\right)_{n\ge0}$
and $\left(T_{n}\right)_{n\ge1}$ recursively by 
\begin{equation}
R_{0}=I,\qquad T_{n}=R^{1/2}_{n-1}A_{n}R^{1/2}_{n-1},\qquad R_{n}=R^{1/2}_{n-1}\left(I-A_{n}\right)R^{1/2}_{n-1}.\label{eq:2-1}
\end{equation}
Since $0\le A_{n}\le I$, both $T_{n}$ and $R_{n}$ are positive,
and $R_{n}=R_{n-1}-T_{n}$. Iterating gives 
\[
I=T_{1}+\cdots+T_{N}+R_{N}
\]
for every $N\ge1$. 

In particular, $\left(R_{n}\right)_{n\ge0}$ is a decreasing sequence
of positive contractions, and has a strong limit, $R_{n}\xrightarrow{s}R_{\infty}$
(see, e.g., \cite{MR751959}). If $R_{\infty}=0$, then $I\overset{s}{=}\sum_{n\ge1}T_{n}$
and so $\left(T_{n}\right)_{n\ge1}$ is a normalized discrete POVM. 

The next theorem gives the natural dilation attached to this construction. 
\begin{thm}
\label{thm:2-1}Let $\left(A_{n}\right)_{n\ge1}$ be positive contractions
on a Hilbert space $H$, and let $\left(R_{n}\right)$ and $\left(T_{n}\right)$
be defined as above. Assume that $R_{n}\xrightarrow{s}0$. For each
$n\ge1$, let $K_{n}=\overline{ran}\left(B_{n}\right)$, where $B_{n}=A^{1/2}_{n}R^{1/2}_{n-1}$,
and set $K=\bigoplus_{n\ge1}K_{n}$. Define 
\[
V:H\to K,\qquad Vh=\left(B_{1}h,B_{2}h,\dots\right).
\]
Then $V$ is an isometry. If $P_{n}$ denotes the orthogonal projection
of $K$ onto the $n$-th summand $K_{n}$, then 
\[
T_{n}=V^{*}P_{n}V
\]
for every $n\ge1$. If $Q_{n}=\sum_{m>n}P_{m}$, then 
\[
R_{n}=V^{*}Q_{n}V
\]
for every $n\ge0$. Moreover, 
\[
K=\overline{span}\left\{ P_{n}Vh:n\ge1,\ h\in H\right\} .
\]
In particular, $\left(K,V,\left(P_{n}\right)\right)$ is the minimal
Naimark dilation of $\left(T_{n}\right)$. 
\end{thm}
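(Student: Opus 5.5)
The plan is to verify the algebraic identities first, then deduce the isometry property from the hypothesis $R_{n}\xrightarrow{s}0$, and finally treat minimality.

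\emph{Step 1 (well-definedness and $T_n=V^*P_nV$).} Observe that $B_{n}^{*}B_{n}=R_{n-1}^{1/2}A_{n}R_{n-1}^{1/2}=T_{n}$. Hence $\|B_{n}h\|^{2}=\langle T_{n}h,h\rangle$. Since $\sum_{n\le N}T_{n}=I-R_{N}\le I$, the series $\sum_{n}\|B_{n}h\|^{2}$ converges and $V$ is a well-defined bounded map into $K$. Its adjoint is $V^{*}(k_{n})=\sum_{n}B_{n}^{*}k_{n}$, with strong convergence justified by the same bound. Then $P_{n}Vh$ is the vector with $B_{n}h$ in slot $n$ and zeros elsewhere, so $V^{*}P_{n}V=B_{n}^{*}B_{n}=T_{n}$.

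\emph{Step 2 (isometry and tails).} From $I=T_{1}+\cdots+T_{N}+R_{N}$ we get
\[
\|Vh\|^{2}=\lim_{N}\sum_{n\le N}\langle T_{n}h,h\rangle=\|h\|^{2}-\lim_{N}\langle R_{N}h,h\rangle=\|h\|^{2},
\]
using $R_{N}\xrightarrow{s}0$. This is the only place the hypothesis enters. For the tail projections, $V^{*}Q_{n}V=\sum_{m>n}T_{m}$ in the strong sense, and this equals $\lim_{N}(R_{n}-R_{N})=R_{n}$. The case $n=0$ is the isometry statement $V^{*}V=I$.

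\emph{Step 3 (minimality).} Each $P_{n}VH=\{(0,\dots,B_{n}h,0,\dots)\}$ is dense in the $n$-th summand $K_{n}=\overline{ran}(B_{n})$, by the definition of $K_{n}$. The closed span of all these sets is therefore $\overline{\bigoplus}K_{n}=K$.

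To conclude that this is \emph{the} minimal Naimark dilation, I will invoke the standard uniqueness of minimal dilations. If $(K',V',(P_{n}'))$ is another dilation that is minimal in this sense, then the map $P_{n}Vh\mapsto P_{n}'V'h$ preserves inner products, because
\[
\langle P_{m}Vh,P_{n}Vg\rangle=\delta_{mn}\langle T_{n}h,g\rangle .
\]
It therefore extends to a unitary intertwining the two dilations.

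I expect no serious obstacle. The most delicate point is bookkeeping: $V$ must land in $\bigoplus K_{n}$ rather than in $\bigoplus H$, and the infinite sums defining $V^{*}$ and $Q_{n}$ must be handled with strong convergence. The identity $B_{n}^{*}B_{n}=T_{n}$ is the key that makes every other step routine.
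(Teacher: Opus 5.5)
Your proposal is correct and follows the paper's proof essentially step for step. The identity $B_n^*B_n=T_n$ gives $T_n=V^*P_nV$. The telescoping relation $I=T_1+\cdots+T_N+R_N$, together with $R_N\xrightarrow{s}0$, yields both the isometry property and $R_n=V^*Q_nV$, and density of the range of $B_n$ in $K_n$ gives minimality. Your additional paragraph on uniqueness of minimal dilations up to unitary equivalence is sound, and it makes explicit a point that the paper leaves implicit.
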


\begin{proof}
Using \prettyref{eq:2-1}, for all $h\in H$, 
\[
\sum^{N}_{n=1}\left\Vert B_{n}h\right\Vert ^{2}=\sum^{N}_{n=1}\left\langle h,T_{n}h\right\rangle =\left\Vert h\right\Vert ^{2}-\left\langle h,R_{N}h\right\rangle .
\]
Since $R_{N}\xrightarrow{s}0$, it follows that $\sum_{n\ge1}\left\Vert B_{n}h\right\Vert ^{2}=\left\Vert h\right\Vert ^{2}$.
Thus $V$ is well defined and isometric.

If $P_{n}$ is the coordinate projection onto $K_{n}$, then $P_{n}Vh=\left(0,\dots,0,B_{n}h,0,\dots\right)$,
and therefore $V^{*}P_{n}V=B^{*}_{n}B_{n}=T_{n}$.

Now let $Q_{n}\overset{s}{=}\sum_{m>n}P_{m}$. Then 
\[
V^{*}Q_{n}V\overset{s}{=}\sum_{m>n}V^{*}P_{m}V\overset{s}{=}\sum_{m>n}T_{m}.
\]
On the other hand, from $I=\sum^{n}_{m=1}T_{m}+R_{n}$ and $I\overset{s}{=}\sum_{m\ge1}T_{m}$,
we get $R_{n}\overset{s}{=}\sum_{m>n}T_{m}$. Hence $R_{n}=V^{*}Q_{n}V$.

Finally, $P_{n}Vh=B_{n}h$ and $K_{n}=\overline{ran}\left(B_{n}\right)$,
so 
\[
K=\overline{span}\left\{ P_{n}Vh:n\ge1,\ h\in H\right\} .
\]
Thus the dilation is minimal. 
\end{proof}

\prettyref{thm:2-1} identifies the residual chain with the tail filtration
in the dilation space. With $Q_{n}\overset{s}{=}\sum_{m>n}P_{m}$,
the subspaces 
\[
K=Q_{0}K\supset Q_{1}K\supset Q_{2}K\supset\cdots
\]
form a decreasing chain of closed subspaces, and $R_{n}=V^{*}Q_{n}V$
for every $n\ge0$. Thus the residual at stage $n$ is obtained by
compressing the tail space after the first $n$ coordinates have been
removed.

\subsection{Recovering the driving contractions}

We now turn to the converse question. Every discrete POVM admits such
a residual factorization (\prettyref{prop:2-2}). What matters in
the residual recursion is not just that these factorizations exist,
but that a chosen ordered sequence $\left(A_{n}\right)_{n\ge1}$ is
used step by step on the successive residual spaces.
\begin{prop}
\label{prop:2-2}Let $\left(T_{n}\right)_{n\ge1}$ be a POVM on a
Hilbert space $H$. Define 
\[
R_{0}=I,\qquad R_{n}=I-\sum^{n}_{m=1}T_{m}
\]
for $n\ge1$. Then, for each $n\ge1$, there exists a positive contraction
$A_{n}$ on $H$ such that 
\[
T_{n}=R^{1/2}_{n-1}A_{n}R^{1/2}_{n-1}.
\]
\end{prop}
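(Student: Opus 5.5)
The plan is to use the Douglas factorization lemma. First observe that $0\le T_n\le R_{n-1}$: since $\left(T_m\right)$ is a POVM, $\sum_{m\ge n}T_m\le I-\sum_{m<n}T_m=R_{n-1}$ in the strong sense, so $R_{n-1}-T_n\ge 0$. In particular every $R_n$ is a positive contraction and $R^{1/2}_{n-1}$ is defined. The task is then the standard one: given $0\le T\le R$, find $0\le A\le I$ with $T=R^{1/2}AR^{1/2}$.

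Write $R=R_{n-1}$ and $T=T_n$. From $\|T^{1/2}h\|^2=\langle Th,h\rangle\le\langle Rh,h\rangle=\|R^{1/2}h\|^2$ for all $h\in H$, define a map on $\operatorname{ran}R^{1/2}$ by $X(R^{1/2}h)=T^{1/2}h$. This map is well defined and contractive: if $R^{1/2}h=R^{1/2}h'$, then $\|T^{1/2}(h-h')\|\le\|R^{1/2}(h-h')\|=0$. Extend $X$ by continuity to $\overline{\operatorname{ran}}\,R^{1/2}$, and set it to $0$ on $\ker R^{1/2}=(\operatorname{ran}R^{1/2})^{\perp}$. This gives a contraction $X$ on $H$ with $XR^{1/2}=T^{1/2}$, which is exactly the content of Douglas's lemma.

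Now set $A_n=X^{*}X$. Then $0\le A_n\le I$ because $\|X\|\le1$, and
\[
R^{1/2}A_nR^{1/2}=(XR^{1/2})^{*}(XR^{1/2})=T^{1/2}T^{1/2}=T_n,
\]
as required. This choice is canonical: $A_n$ is supported on $\overline{\operatorname{ran}}\,R_{n-1}$.

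The only point needing care is that $\operatorname{ran}R^{1/2}$ need not be closed. The construction avoids any inverse of $R^{1/2}$ by defining $X$ on the range and extending by continuity, so this causes no trouble. Checking $T_n\le R_{n-1}$ from the normalization of the POVM is routine.
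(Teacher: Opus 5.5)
Your proposal is correct and follows essentially the same route as the paper: both first derive $0\le T_n\le R_{n-1}$ from the normalization of the POVM and then apply the Douglas-type factorization for $0\le S\le T$. The only difference is that you prove the factorization step explicitly, building a contraction $X$ with $XR_{n-1}^{1/2}=T_n^{1/2}$ and taking $A_n=X^{*}X$, whereas the paper cites it as a standard result.
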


\begin{proof}
For each $n\ge1$, we have 
\[
R_{n-1}=I-\sum^{n-1}_{m=1}T_{m},\qquad R_{n}=R_{n-1}-T_{n}.
\]
Since $R_{n}\ge0$, it follows that 
\[
0\le T_{n}\le R_{n-1}.
\]
The standard factorization for positive operators $0\le S\le T$ (see,
for example, \cite{MR203464,MR1873025}) now yields a positive contraction
$A_{n}$ such that $T_{n}=R^{1/2}_{n-1}A_{n}R^{1/2}_{n-1}$. 
\end{proof}

\prettyref{thm:2-1} realizes the extracted operators $\left(T_{n}\right)$
and the residuals $\left(R_{n}\right)$ inside the minimal Naimark
dilation through the coordinate projections $\left(P_{n}\right)$
and the tail projections $\left(Q_{n}\right)$. We now describe the
same picture at the level of the driving contractions.

In \prettyref{eq:2-1}, only the part of $A_{n}$ acting on the range
of $R^{1/2}_{n-1}$ is seen. The next proposition identifies this
part of $A_{n}$ with the compression of the coordinate projection
$P_{n}$ to the corresponding tail space. Similarly, the part of $I-A_{n}$
seen by the residual recursion is identified with the compression
of the next tail projection $Q_{n}$.

Thus the orthogonal splitting $Q_{n-1}K=P_{n}K\oplus Q_{n}K$ in the
dilation space becomes, after compression to the residual space on
$H$, the pair $A_{n}$ and $I-A_{n}$.
\begin{prop}
\label{prop:2-3}In the setting of \prettyref{thm:2-1}, let 
\[
H_{n-1}=\overline{ran}\,(R^{1/2}_{n-1})
\]
for $n\ge1$, and let $S_{n-1}$ denote the orthogonal projection
of $H$ onto $H_{n-1}$. Then there is a unique isometry 
\begin{equation}
U_{n-1}:H_{n-1}\to Q_{n-1}K\label{eq:2-2}
\end{equation}
such that 
\begin{equation}
U_{n-1}R^{1/2}_{n-1}h=Q_{n-1}Vh\label{eq:2-3}
\end{equation}
for every $h\in H$. Moreover, 
\begin{equation}
S_{n-1}A_{n}\big|_{H_{n-1}}=U^{*}_{n-1}P_{n}U_{n-1}\label{eq:2-4}
\end{equation}
and 
\begin{equation}
S_{n-1}\left(I-A_{n}\right)\big|_{H_{n-1}}=U^{*}_{n-1}Q_{n}U_{n-1}.\label{eq:2-5}
\end{equation}
In particular, 
\begin{equation}
T_{n}=R^{1/2}_{n-1}U^{*}_{n-1}P_{n}U_{n-1}R^{1/2}_{n-1},\qquad R_{n}=R^{1/2}_{n-1}U^{*}_{n-1}Q_{n}U_{n-1}R^{1/2}_{n-1}.\label{eq:2-6}
\end{equation}
\end{prop}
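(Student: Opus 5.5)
The plan is to build $U_{n-1}$ from a norm identity, extend it by density, and then derive \prettyref{eq:2-4} and \prettyref{eq:2-5} by comparing quadratic forms on a dense subspace.

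\textbf{Construction of $U_{n-1}$.} By \prettyref{thm:2-1}, for every $h\in H$,
\[
\|Q_{n-1}Vh\|^{2}=\langle h,V^{*}Q_{n-1}Vh\rangle=\langle h,R_{n-1}h\rangle=\|R^{1/2}_{n-1}h\|^{2}.
\]
So the assignment $R^{1/2}_{n-1}h\mapsto Q_{n-1}Vh$ is well defined and isometric on $\operatorname{ran}(R^{1/2}_{n-1})$: if two vectors $h,h'$ have the same image under $R^{1/2}_{n-1}$, the identity applied to $h-h'$ gives $Q_{n-1}Vh=Q_{n-1}Vh'$. The map extends by continuity to an isometry on the closure $H_{n-1}$, with values in $Q_{n-1}K$. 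Uniqueness holds because \prettyref{eq:2-3} prescribes the map on a dense subspace of $H_{n-1}$.

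\textbf{The compression identities.} For \prettyref{eq:2-4}, it suffices to show that
\[
\langle R^{1/2}_{n-1}h,\,A_{n}R^{1/2}_{n-1}h'\rangle=\langle U_{n-1}R^{1/2}_{n-1}h,\,P_{n}U_{n-1}R^{1/2}_{n-1}h'\rangle
\]
for all $h,h'\in H$. Both sides are bounded sesquilinear forms on $H_{n-1}$. Moreover, $S_{n-1}$ acts as the identity on vectors of $H_{n-1}$ when paired, so the left side is the form of $S_{n-1}A_{n}|_{H_{n-1}}$. The forms therefore agree once they agree on the dense set $\operatorname{ran}R^{1/2}_{n-1}$.

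The left side equals $\langle h,T_{n}h'\rangle$ by \prettyref{eq:2-1}. The right side equals $\langle Q_{n-1}Vh,P_{n}Q_{n-1}Vh'\rangle$. Since $P_{n}\le Q_{n-1}$, we have $P_{n}Q_{n-1}=P_{n}$ and $Q_{n-1}P_{n}=P_{n}$, so this becomes $\langle Vh,P_{n}Vh'\rangle=\langle h,T_{n}h'\rangle$ by \prettyref{thm:2-1}.

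The proof of \prettyref{eq:2-5} is identical. One uses $Q_{n}\le Q_{n-1}$ and $V^{*}Q_{n}V=R_{n}=R^{1/2}_{n-1}(I-A_{n})R^{1/2}_{n-1}$. Alternatively, \prettyref{eq:2-5} follows from \prettyref{eq:2-4}, because $Q_{n-1}=P_{n}+Q_{n}$ and $U^{*}_{n-1}Q_{n-1}U_{n-1}=I_{H_{n-1}}$, the latter since $U_{n-1}$ is an isometry into $Q_{n-1}K$.

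\textbf{Consequence \prettyref{eq:2-6}.} Since $R^{1/2}_{n-1}$ maps $H$ into $H_{n-1}$ and is self-adjoint with range in $H_{n-1}$, we have
\[
R^{1/2}_{n-1}A_{n}R^{1/2}_{n-1}=R^{1/2}_{n-1}S_{n-1}A_{n}S_{n-1}R^{1/2}_{n-1}.
\]
Substituting \prettyref{eq:2-4} and \prettyref{eq:2-5} then gives \prettyref{eq:2-6}.

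\textbf{Main obstacle.} The only delicate point is bookkeeping of domains. $S_{n-1}A_{n}|_{H_{n-1}}$ is an operator on $H_{n-1}$, while $U^{*}_{n-1}$ must be read as the adjoint of the map $H_{n-1}\to Q_{n-1}K$, or equivalently as the restriction of an adjoint on $K$. I would fix this convention explicitly before comparing forms.
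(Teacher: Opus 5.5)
Your proof is correct and follows the paper's argument essentially step for step. The norm identity $\Vert Q_{n-1}Vh\Vert=\Vert R^{1/2}_{n-1}h\Vert$ defines and extends $U_{n-1}$, the forms are compared on $\operatorname{ran}(R^{1/2}_{n-1})$ using $P_{n}\le Q_{n-1}$ and $V^{*}P_{n}V=T_{n}$, and \eqref{eq:2-6} comes from sandwiching by $R^{1/2}_{n-1}$; your alternative derivation of \eqref{eq:2-5} from \eqref{eq:2-4} via $Q_{n-1}=P_{n}+Q_{n}$ and $U^{*}_{n-1}Q_{n-1}U_{n-1}=I_{H_{n-1}}$ is also valid, and your explicit well-definedness check fills in a detail the paper leaves implicit.
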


\begin{proof}
Fix $n\ge1$. For $h\in H$, 
\[
\left\Vert Q_{n-1}Vh\right\Vert ^{2}=\left\langle h,V^{*}Q_{n-1}Vh\right\rangle =\left\langle h,R_{n-1}h\right\rangle =\left\Vert R^{1/2}_{n-1}h\right\Vert ^{2}.
\]
Thus the rule in \prettyref{eq:2-3} defines an isometry on $ran\:(R^{1/2}_{n-1})$,
and therefore extends uniquely to an isometry as in \prettyref{eq:2-2}.
This proves the existence and uniqueness of $U_{n-1}$ satisfying
\prettyref{eq:2-3}.

Since $P_{n}\le Q_{n-1}$, for $h,k\in H$, 
\[
\begin{aligned}\left\langle R^{1/2}_{n-1}h,U^{*}_{n-1}P_{n}U_{n-1}R^{1/2}_{n-1}k\right\rangle  & =\left\langle Q_{n-1}Vh,P_{n}Q_{n-1}Vk\right\rangle \\
 & =\left\langle Vh,P_{n}Vk\right\rangle =\left\langle h,T_{n}k\right\rangle .
\end{aligned}
\]
Since $T_{n}=R^{1/2}_{n-1}A_{n}R^{1/2}_{n-1}$, this gives 
\[
\left\langle x,U^{*}_{n-1}P_{n}U_{n-1}y\right\rangle =\left\langle x,A_{n}y\right\rangle 
\]
first for $x,y\in ran\,(R^{1/2}_{n-1})$, and then by density for
$x,y\in H_{n-1}$. Hence \prettyref{eq:2-4} follows.

The same computation with $P_{n}$ replaced by $Q_{n}$, using $Q_{n}\le Q_{n-1}$,
$V^{*}Q_{n}V=R_{n}$, and 
\[
R_{n}=R^{1/2}_{n-1}\left(I-A_{n}\right)R^{1/2}_{n-1},
\]
gives \prettyref{eq:2-5}. Multiplying \prettyref{eq:2-4} and \prettyref{eq:2-5}
on the left and right by $R^{1/2}_{n-1}$ gives \prettyref{eq:2-6}. 
\end{proof}

\subsection{Tail spaces and residual updates}\label{subsec:2-3}

We now use the dilation from \prettyref{thm:2-1} to describe the
update step as a geometric operation on tail spaces. For $n\ge0$,
set 
\[
M_{n}=\overline{Q_{n}VH}\subset Q_{n}K.
\]
By \prettyref{prop:2-3}, this is the dilation space version of $\overline{ran}\,(R^{1/2}_{n})$.
Since $Q_{n}\le Q_{n-1}$, the next space is obtained from the previous
one by 
\[
M_{n}=\overline{Q_{n}M_{n-1}}.
\]

There is another natural candidate for the next space, namely the
ordinary intersection 
\[
M_{n-1}\cap Q_{n}K.
\]
Thus each step gives two subspaces to compare: the projection update
$\overline{Q_{n}M_{n-1}}$, and the intersection $M_{n-1}\cap Q_{n}K$.
They agree when the current space is compatible with the splitting
\[
Q_{n-1}K=P_{n}K\oplus Q_{n}K.
\]
When they do not agree, the quotient 
\[
\overline{Q_{n}M_{n-1}}\Big/\left(M_{n-1}\cap Q_{n}K\right)
\]
measures the part produced by projection into the tail rather than
by intersection with it. The next theorem expresses this quotient
through the compression of $P_{n}$ to the current space.

For $n\ge1$, set 
\[
A^{\mathrm{res}}_{n}=S_{n-1}A_{n}\big|_{H_{n-1}}\in B\left(H_{n-1}\right).
\]

\begin{thm}
\label{thm:2-4} In the setting of \prettyref{thm:2-1}, fix $n\ge1$
and put 
\[
M_{n-1}=\overline{Q_{n-1}VH}\subset Q_{n-1}K.
\]
Let $E_{n-1}$ be the orthogonal projection of $Q_{n-1}K$ onto $M_{n-1}$.
Under the isometry 
\[
U_{n-1}:H_{n-1}\to M_{n-1},
\]
the operator $A^{\mathrm{res}}_{n}$ is unitarily equivalent to 
\[
C_{n}=E_{n-1}P_{n}\big|_{M_{n-1}}.
\]
Moreover, 
\[
C_{n}-C^{2}_{n}=E_{n-1}P_{n}\left(I-E_{n-1}\right)P_{n}\big|_{M_{n-1}}.
\]
Thus $A^{\mathrm{res}}_{n}$ is a projection if and only if $M_{n-1}$
reduces $P_{n}$. Equivalently, 
\[
M_{n-1}=\left(M_{n-1}\cap P_{n}K\right)\oplus\left(M_{n-1}\cap Q_{n}K\right).
\]
In this case, 
\[
M_{n}=M_{n-1}\cap Q_{n}K.
\]

If $M_{n-1}$ is finite-dimensional, then 
\[
\dim\left(M_{n}\Big/\left(M_{n-1}\cap Q_{n}K\right)\right)=\operatorname{rank}\left(A^{\mathrm{res}}_{n}-\left(A^{\mathrm{res}}_{n}\right)^{2}\right).
\]
\end{thm}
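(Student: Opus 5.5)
Most of the work is already in \prettyref{prop:2-3}; the rest is a short projection computation and a finite-dimensional kernel argument.

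\textbf{Step 1: the unitary equivalence.} The isometry $U_{n-1}$ of \prettyref{prop:2-3} sends $\operatorname{ran}(R^{1/2}_{n-1})$ onto $Q_{n-1}VH$, which is dense in $M_{n-1}$. Hence $U_{n-1}$ is a unitary from $H_{n-1}$ onto $M_{n-1}$, and $U_{n-1}U^{*}_{n-1}=E_{n-1}$ as an operator on $Q_{n-1}K$. By \prettyref{eq:2-4},
\[
U_{n-1}A^{\mathrm{res}}_{n}U^{*}_{n-1}=E_{n-1}P_{n}E_{n-1}\big|_{M_{n-1}}=C_{n}.
\]

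\textbf{Step 2: the defect formula.} On $M_{n-1}$ we have $C_{n}^{2}=E_{n-1}P_{n}E_{n-1}P_{n}$, while $C_{n}=E_{n-1}P_{n}\cdot P_{n}$ because $P_{n}$ is idempotent. Subtracting gives
\[
C_{n}-C_{n}^{2}=E_{n-1}P_{n}(I-E_{n-1})P_{n}\big|_{M_{n-1}}.
\]
For $x\in M_{n-1}$ this yields $\langle x,(C_{n}-C_{n}^{2})x\rangle=\|(I-E_{n-1})P_{n}x\|^{2}$. So $C_{n}$ is a projection exactly when $P_{n}M_{n-1}\subset M_{n-1}$. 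Since $P_{n}$ is self-adjoint, this happens exactly when $M_{n-1}$ reduces $P_{n}$.

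\textbf{Step 3: the equivalent splitting and the case $M_{n}=M_{n-1}\cap Q_{n}K$.} Write $x=P_{n}x+Q_{n}x$ for $x\in Q_{n-1}K$. If $M_{n-1}$ reduces $P_{n}$, then both $P_{n}x$ and $Q_{n}x$ lie in $M_{n-1}$ for $x\in M_{n-1}$, which gives the stated direct-sum decomposition. Conversely, that decomposition clearly makes $M_{n-1}$ invariant under $P_{n}$. In the reducing case, $Q_{n}M_{n-1}=M_{n-1}\cap Q_{n}K$, and this space is closed. Combined with $M_{n}=\overline{Q_{n}M_{n-1}}$ from \prettyref{subsec:2-3}, this gives $M_{n}=M_{n-1}\cap Q_{n}K$.

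\textbf{Step 4: the rank formula.} This is the step that needs the most care. Assume $\dim M_{n-1}<\infty$; then $M_{n}=Q_{n}M_{n-1}$ with no closure needed. Consider the surjection $Q_{n}|_{M_{n-1}}\colon M_{n-1}\to M_{n}$. Its kernel is $M_{n-1}\cap P_{n}K$. Its restriction to $M_{n-1}\cap Q_{n}K$ is the identity. Let $W$ be the orthogonal complement of $(M_{n-1}\cap P_{n}K)\oplus(M_{n-1}\cap Q_{n}K)$ inside $M_{n-1}$; these two subspaces are orthogonal, so the sum is direct. I claim that $Q_{n}$ maps $W$ injectively onto a complement of $M_{n-1}\cap Q_{n}K$ in $M_{n}$. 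Surjectivity onto a complement follows by decomposing $M_{n-1}$. For injectivity modulo the intersection, suppose $w\in W$ and $Q_{n}w\in M_{n-1}\cap Q_{n}K$. Then $P_{n}w=w-Q_{n}w\in M_{n-1}\cap P_{n}K$, so $w$ lies in the direct sum and hence $w=0$. This gives
\[
\dim\bigl(M_{n}/(M_{n-1}\cap Q_{n}K)\bigr)=\dim W=\dim M_{n-1}-\dim\ker C_{n}-\dim\ker(I-C_{n}).
\]
Here I use $\ker C_{n}=M_{n-1}\cap Q_{n}K$, because $\langle x,C_{n}x\rangle=\|P_{n}x\|^{2}$. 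I also use $\ker(I-C_{n})=M_{n-1}\cap P_{n}K$, because $\langle x,(I-C_{n})x\rangle=\|Q_{n}x\|^{2}$. Finally, $C_{n}$ is a positive contraction on a finite-dimensional space, and the eigenvalues $0$ and $1$ are exactly the zeros of $t-t^{2}$. Spectrally, therefore,
\[
\operatorname{rank}(C_{n}-C_{n}^{2})=\dim M_{n-1}-\dim\ker C_{n}-\dim\ker(I-C_{n}).
\]
Transporting this through the unitary equivalence of Step 1 gives the formula for $A^{\mathrm{res}}_{n}$.
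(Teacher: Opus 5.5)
Your proposal is correct and follows the paper's proof step for step. It uses the unitary equivalence from \prettyref{prop:2-3}, the identity $C_{n}-C_{n}^{2}=E_{n-1}P_{n}(I-E_{n-1})P_{n}|_{M_{n-1}}$ with quadratic form $\|(I-E_{n-1})P_{n}x\|^{2}$, the reducing-subspace characterization, and the count $\ker C_{n}\oplus\ker(I-C_{n})$. The only difference is in Step 4: the paper skips your complement $W$ and gets $\dim(M_{n}/(M_{n-1}\cap Q_{n}K))=\dim M_{n-1}-\dim(M_{n-1}\cap P_{n}K)-\dim(M_{n-1}\cap Q_{n}K)$ directly from rank--nullity for $Q_{n}|_{M_{n-1}}$ together with $M_{n-1}\cap Q_{n}K\subset M_{n}$.
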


\begin{proof}
By \prettyref{prop:2-3}, 
\[
U_{n-1}H_{n-1}=M_{n-1}
\]
and 
\[
A^{\mathrm{res}}_{n}=U^{*}_{n-1}P_{n}U_{n-1}.
\]
Hence $A^{\mathrm{res}}_{n}$ is unitarily equivalent to 
\[
C_{n}=E_{n-1}P_{n}\big|_{M_{n-1}}.
\]

Since $E_{n-1}$ is the identity on $M_{n-1}$ and $P^{2}_{n}=P_{n}$,
we have 
\[
C_{n}-C^{2}_{n}=E_{n-1}P_{n}E_{n-1}-E_{n-1}P_{n}E_{n-1}P_{n}E_{n-1}
\]
on $M_{n-1}$. Therefore 
\[
C_{n}-C^{2}_{n}=E_{n-1}P_{n}\left(I-E_{n-1}\right)P_{n}\big|_{M_{n-1}}.
\]
Equivalently, for $\xi\in M_{n-1}$, 
\[
\left\langle \xi,\left(C_{n}-C^{2}_{n}\right)\xi\right\rangle =\left\Vert \left(I-E_{n-1}\right)P_{n}\xi\right\Vert ^{2}.
\]

It follows that $C_{n}$ is a projection if and only if 
\[
P_{n}M_{n-1}\subset M_{n-1}.
\]
Since $P_{n}$ is self-adjoint, this is equivalent to saying that
$M_{n-1}$ reduces $P_{n}$. Because 
\[
Q_{n-1}K=P_{n}K\oplus Q_{n}K,
\]
this is equivalent to 
\[
M_{n-1}=\left(M_{n-1}\cap P_{n}K\right)\oplus\left(M_{n-1}\cap Q_{n}K\right).
\]
In that case $Q_{n}M_{n-1}\subset M_{n-1}$, and hence 
\[
M_{n}=\overline{Q_{n}M_{n-1}}=M_{n-1}\cap Q_{n}K.
\]

Now assume that $M_{n-1}$ is finite-dimensional. The map 
\[
Q_{n}\big|_{M_{n-1}}:M_{n-1}\to Q_{n}K
\]
has range $M_{n}$ and kernel 
\[
M_{n-1}\cap P_{n}K.
\]
Thus 
\[
\dim M_{n}=\dim M_{n-1}-\dim\left(M_{n-1}\cap P_{n}K\right).
\]
Since $M_{n-1}\cap Q_{n}K\subset M_{n}$, this gives 
\[
\dim\left(M_{n}\Big/\left(M_{n-1}\cap Q_{n}K\right)\right)=\dim M_{n-1}-\dim\left(M_{n-1}\cap P_{n}K\right)-\dim\left(M_{n-1}\cap Q_{n}K\right).
\]

For $\xi\in M_{n-1}$, 
\[
\left\langle \xi,C_{n}\xi\right\rangle =\left\Vert P_{n}\xi\right\Vert ^{2}.
\]
Hence 
\[
\ker C_{n}=M_{n-1}\cap Q_{n}K.
\]
Also, 
\[
\ker\left(I-C_{n}\right)=M_{n-1}\cap P_{n}K.
\]
Since $0\le C_{n}\le I$, the kernel of $C_{n}\left(I-C_{n}\right)$
is 
\[
\ker C_{n}\oplus\ker\left(I-C_{n}\right).
\]
Therefore 
\[
\operatorname{rank}C_{n}\left(I-C_{n}\right)=\dim M_{n-1}-\dim\left(M_{n-1}\cap P_{n}K\right)-\dim\left(M_{n-1}\cap Q_{n}K\right).
\]
Combining the last two displayed identities yields 
\[
\dim\left(M_{n}\Big/\left(M_{n-1}\cap Q_{n}K\right)\right)=\operatorname{rank}C_{n}\left(I-C_{n}\right).
\]
Since $C_{n}$ is unitarily equivalent to $A^{\mathrm{res}}_{n}$,
this is 
\[
\dim\left(M_{n}\Big/\left(M_{n-1}\cap Q_{n}K\right)\right)=\operatorname{rank}\left(A^{\mathrm{res}}_{n}-\left(A^{\mathrm{res}}_{n}\right)^{2}\right).
\]
\end{proof}

Thus the sharp case has a simple form. When $A^{\mathrm{res}}_{n}$
is a projection, the next residual subspace is obtained by intersecting
the current one with the next tail. When $A^{\mathrm{res}}_{n}$ is
not a projection, the update 
\[
M_{n-1}\mapsto M_{n}
\]
contains additional tail directions. In finite dimension, their number
is the rank of 
\[
A^{\mathrm{res}}_{n}-\left(A^{\mathrm{res}}_{n}\right)^{2}.
\]

\begin{example}
Let $K=\mathbb{C}^{2}$, let $P_{1}$ and $P_{2}$ be the coordinate
projections, and let $M_{0}$ be the line spanned by 
\[
\xi=\alpha e_{1}+\beta e_{2},\qquad\alpha\ne0,\qquad\beta\ne0.
\]
Then the compression of $P_{1}$ to $M_{0}$ is multiplication by
$\left|\alpha\right|^{2}/\left\Vert \xi\right\Vert ^{2}$, and the
compression of $P_{2}$ to $M_{0}$ is multiplication by $\left|\beta\right|^{2}/\left\Vert \xi\right\Vert ^{2}$.
Neither compression is a projection. Also, 
\[
M_{0}\cap P_{2}K=\left\{ 0\right\} ,
\]
whereas 
\[
P_{2}M_{0}=P_{2}K.
\]
Thus the intersection model gives no tail line, while the residual
update by projection gives the next coordinate line. This is the finite-dimensional
case measured by $A^{\mathrm{res}}_{1}-\left(A^{\mathrm{res}}_{1}\right)^{2}$. 
\end{example}

We next remove a finite block of coordinates at once. The relevant
compression is not formed from the later residual spaces one step
at a time. All coordinates in the block are compressed to the same
starting space $M_{n-1}$. This gives a block version of the previous
theorem and also shows how different coordinates in the block interact
after compression.

For $1\le n\le m$, set 
\[
P_{\left[n,m\right]}=\sum^{m}_{j=n}P_{j}.
\]
Thus, on $Q_{n-1}K$, 
\[
Q_{n-1}=P_{\left[n,m\right]}+Q_{m}.
\]

\begin{thm}
\label{thm:2-6} In the setting of \prettyref{thm:2-1}, fix integers
$1\le n\le m$ and put 
\[
M_{n-1}=\overline{Q_{n-1}VH}\subset Q_{n-1}K.
\]
Let $E_{n-1}$ be the orthogonal projection of $Q_{n-1}K$ onto $M_{n-1}$,
and define 
\[
C_{\left[n,m\right]}=E_{n-1}P_{\left[n,m\right]}\big|_{M_{n-1}}.
\]
Then 
\[
C_{\left[n,m\right]}-C^{2}_{\left[n,m\right]}=E_{n-1}P_{\left[n,m\right]}\left(I-E_{n-1}\right)P_{\left[n,m\right]}\big|_{M_{n-1}}.
\]
Moreover, $C_{\left[n,m\right]}$ is a projection if and only if $M_{n-1}$
reduces $P_{\left[n,m\right]}$. Equivalently, 
\[
M_{n-1}=\left(M_{n-1}\cap P_{\left[n,m\right]}K\right)\oplus\left(M_{n-1}\cap Q_{m}K\right).
\]
In this case, 
\[
M_{m}=M_{n-1}\cap Q_{m}K.
\]

If $M_{n-1}$ is finite-dimensional, then 
\[
\dim\left(M_{m}/\left(M_{n-1}\cap Q_{m}K\right)\right)=\operatorname{rank}\left(C_{\left[n,m\right]}-C^{2}_{\left[n,m\right]}\right).
\]
\end{thm}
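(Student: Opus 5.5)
The plan is to repeat the proof of \prettyref{thm:2-4} verbatim, replacing the single coordinate projection $P_{n}$ by the block projection $P_{\left[n,m\right]}$ and the tail $Q_{n}$ by $Q_{m}$. Two facts from the dilation make this possible. First, $P_{\left[n,m\right]}$ is an orthogonal projection with $P_{\left[n,m\right]}\le Q_{n-1}$. Second, on $Q_{n-1}K$ there is the orthogonal splitting
\[
Q_{n-1}K=P_{\left[n,m\right]}K\oplus Q_{m}K .
\]
The only new ingredient is that the projection update over $m-n+1$ steps composes to a single projection. Since $Q_{m}\le Q_{j}$ for $j\le m$, we have $Q_{m}Q_{j}=Q_{m}$. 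Iterating $M_{j}=\overline{Q_{j}M_{j-1}}$ from $j=n$ to $j=m$ then gives
\[
M_{m}=\overline{Q_{m}M_{n-1}} .
\]
One can also see this directly from $M_{m}=\overline{Q_{m}VH}=\overline{Q_{m}Q_{n-1}VH}$, using continuity of $Q_{m}$ to pass through the closure.

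The algebraic identity uses $E_{n-1}\xi=\xi$ for $\xi\in M_{n-1}$ and $P_{\left[n,m\right]}^{2}=P_{\left[n,m\right]}$. These give
\[
C_{\left[n,m\right]}-C_{\left[n,m\right]}^{2}=E_{n-1}P_{\left[n,m\right]}\left(I-E_{n-1}\right)P_{\left[n,m\right]}\big|_{M_{n-1}},
\]
and hence, for $\xi\in M_{n-1}$,
\[
\left\langle \xi,\left(C_{\left[n,m\right]}-C_{\left[n,m\right]}^{2}\right)\xi\right\rangle =\left\Vert \left(I-E_{n-1}\right)P_{\left[n,m\right]}\xi\right\Vert ^{2}.
\]
So $C_{\left[n,m\right]}$ is a projection exactly when $P_{\left[n,m\right]}M_{n-1}\subset M_{n-1}$. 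By self-adjointness this means $M_{n-1}$ reduces $P_{\left[n,m\right]}$. Using the splitting above, this is equivalent to
\[
M_{n-1}=\left(M_{n-1}\cap P_{\left[n,m\right]}K\right)\oplus\left(M_{n-1}\cap Q_{m}K\right).
\]
In that case $Q_{m}M_{n-1}\subset M_{n-1}$, so $Q_{m}M_{n-1}\subset M_{n-1}\cap Q_{m}K$. The reverse inclusion is trivial, since $Q_{m}$ fixes $Q_{m}K$. Because $M_{n-1}\cap Q_{m}K$ is closed, we get $M_{m}=M_{n-1}\cap Q_{m}K$.

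For the finite-dimensional count I would argue as follows.
\begin{itemize}
\item The map $Q_{m}|_{M_{n-1}}$ has range $M_{m}$ (no closure is needed in finite dimensions) and kernel $M_{n-1}\cap P_{\left[n,m\right]}K$. Hence $\dim M_{m}=\dim M_{n-1}-\dim\left(M_{n-1}\cap P_{\left[n,m\right]}K\right)$.
\item The intersection $M_{n-1}\cap Q_{m}K$ is contained in $M_{m}$, so the quotient dimension is $\dim M_{n-1}-\dim\left(M_{n-1}\cap P_{\left[n,m\right]}K\right)-\dim\left(M_{n-1}\cap Q_{m}K\right)$.
\item From $\langle\xi,C_{\left[n,m\right]}\xi\rangle=\Vert P_{\left[n,m\right]}\xi\Vert^{2}$ we read off $\ker C_{\left[n,m\right]}=M_{n-1}\cap Q_{m}K$ and $\ker\left(I-C_{\left[n,m\right]}\right)=M_{n-1}\cap P_{\left[n,m\right]}K$.
\item Since $0\le C_{\left[n,m\right]}\le I$, the kernel of $C_{\left[n,m\right]}\left(I-C_{\left[n,m\right]}\right)$ is the direct sum of these two kernels. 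Its rank is therefore the same expression as the quotient dimension, which proves the formula.
\end{itemize}

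I expect no real obstacle here. The one point to verify carefully is the identity $M_{m}=\overline{Q_{m}M_{n-1}}$, because the block statement compares a single projection step from $M_{n-1}$ with the iterated updates that define $M_{m}$.
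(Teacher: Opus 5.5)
Your proposal is correct and follows the paper's proof essentially line for line. It uses the identity $C_{[n,m]}-C_{[n,m]}^{2}=E_{n-1}P_{[n,m]}(I-E_{n-1})P_{[n,m]}|_{M_{n-1}}$, the quadratic-form criterion for $P_{[n,m]}M_{n-1}\subset M_{n-1}$, the splitting $Q_{n-1}K=P_{[n,m]}K\oplus Q_{m}K$, and the rank--nullity count for $Q_{m}|_{M_{n-1}}$ compared with $\ker C_{[n,m]}\oplus\ker(I-C_{[n,m]})$. Your explicit check that $M_{m}=\overline{Q_{m}M_{n-1}}$, via $Q_{m}Q_{n-1}=Q_{m}$ and continuity, fills in a step the paper uses without comment.
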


\begin{proof}
Write $M=M_{n-1}$, $E=E_{n-1}$, and $P=P_{\left[n,m\right]}$. Then
$P$ is an orthogonal projection on $Q_{n-1}K$, and 
\[
C_{\left[n,m\right]}=EP\big|_{M}.
\]
Since $E$ is the identity on $M$ and $P^{2}=P$, we have, on $M$,
\[
C_{\left[n,m\right]}-C^{2}_{\left[n,m\right]}=EPE-EPEPE.
\]
Thus 
\[
C_{\left[n,m\right]}-C^{2}_{\left[n,m\right]}=EP\left(I-E\right)P\big|_{M}.
\]
For $\xi\in M$, this gives 
\[
\left\langle \xi,\left(C_{\left[n,m\right]}-C^{2}_{\left[n,m\right]}\right)\xi\right\rangle =\left\Vert \left(I-E\right)P\xi\right\Vert ^{2}.
\]
Therefore $C_{\left[n,m\right]}$ is a projection if and only if $PM\subset M$.
Since $P$ is self-adjoint, this is equivalent to saying that $M$
reduces $P$.

Now $P=P_{\left[n,m\right]}$ and $I-P=Q_{m}$ on $Q_{n-1}K$. Hence
$M$ reduces $P$ if and only if 
\[
M=\left(M\cap P_{\left[n,m\right]}K\right)\oplus\left(M\cap Q_{m}K\right).
\]
In that case $Q_{m}M\subset M$, and therefore 
\[
M_{m}=\overline{Q_{m}M}=M\cap Q_{m}K.
\]

Assume now that $M$ is finite-dimensional. The map 
\[
Q_{m}\big|_{M}
\]
has range $M_{m}$ and kernel $M\cap P_{\left[n,m\right]}K$. Hence
\[
\dim M_{m}=\dim M-\dim\left(M\cap P_{\left[n,m\right]}K\right).
\]
Since $M\cap Q_{m}K\subset M_{m}$, we obtain 
\[
\dim\left(M_{m}/\left(M\cap Q_{m}K\right)\right)=\dim M-\dim\left(M\cap P_{\left[n,m\right]}K\right)-\dim\left(M\cap Q_{m}K\right).
\]

For $\xi\in M$, 
\[
\left\langle \xi,C_{\left[n,m\right]}\xi\right\rangle =\left\Vert P_{\left[n,m\right]}\xi\right\Vert ^{2}.
\]
It follows that 
\[
\ker C_{\left[n,m\right]}=M\cap Q_{m}K.
\]
Similarly, 
\[
\ker\left(I-C_{\left[n,m\right]}\right)=M\cap P_{\left[n,m\right]}K.
\]
Since $0\le C_{\left[n,m\right]}\le I$, the kernel of 
\[
C_{\left[n,m\right]}\left(I-C_{\left[n,m\right]}\right)
\]
is 
\[
\ker C_{\left[n,m\right]}\oplus\ker\left(I-C_{\left[n,m\right]}\right).
\]
Therefore 
\[
\operatorname{rank}\left(C_{\left[n,m\right]}-C^{2}_{\left[n,m\right]}\right)=\dim M-\dim\left(M\cap P_{\left[n,m\right]}K\right)-\dim\left(M\cap Q_{m}K\right).
\]
Combining the two dimension formulas gives 
\[
\dim\left(M_{m}/\left(M\cap Q_{m}K\right)\right)=\operatorname{rank}\left(C_{\left[n,m\right]}-C^{2}_{\left[n,m\right]}\right).
\]
This is the desired identity. 
\end{proof}

The block operator also shows why a block is not just a list of one-step
terms. Since 
\[
P_{\left[n,m\right]}=\sum^{m}_{j=n}P_{j},
\]
we have 
\[
C_{\left[n,m\right]}-C^{2}_{\left[n,m\right]}=\sum^{m}_{i,j=n}E_{n-1}P_{i}\left(I-E_{n-1}\right)P_{j}\big|_{M_{n-1}}.
\]
The terms with $i=j$ are the one-coordinate contributions, all measured
from the same space $M_{n-1}$. The terms with $i\ne j$ are the off-diagonal
block terms. They can occur even though the projections $P_{i}$ and
$P_{j}$ are orthogonal upstairs, because all of them are being compressed
to the same residual subspace.

Set 
\[
\mathcal{I}_{\left[n,m\right]}=\sum^{m}_{\substack{i,j=n\\
i\ne j
}
}E_{n-1}P_{i}\left(I-E_{n-1}\right)P_{j}\big|_{M_{n-1}}.
\]
Then 
\[
C_{\left[n,m\right]}-C^{2}_{\left[n,m\right]}=\sum^{m}_{j=n}E_{n-1}P_{j}\left(I-E_{n-1}\right)P_{j}\big|_{M_{n-1}}+\mathcal{I}_{\left[n,m\right]}.
\]

\begin{cor}
\label{cor:2-7} With the notation of \prettyref{thm:2-6}, assume
that 
\[
E_{n-1}P_{i}\left(I-E_{n-1}\right)P_{j}\big|_{M_{n-1}}=0
\]
for all distinct $i,j\in\left\{ n,\ldots,m\right\} $. For $j\in\left\{ n,\ldots,m\right\} $,
set 
\[
C_{j}=E_{n-1}P_{j}\big|_{M_{n-1}}.
\]
Then 
\[
C_{\left[n,m\right]}-C^{2}_{\left[n,m\right]}=\sum^{m}_{j=n}\left(C_{j}-C^{2}_{j}\right).
\]
If $M_{n-1}$ is finite-dimensional, then 
\[
\dim\left(M_{m}/\left(M_{n-1}\cap Q_{m}K\right)\right)=\operatorname{rank}\left(\sum^{m}_{j=n}\left(C_{j}-C^{2}_{j}\right)\right).
\]
\end{cor}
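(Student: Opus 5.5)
The plan is to expand the block defect into its diagonal and off-diagonal parts, check that each diagonal term equals $C_j - C_j^2$, and then read off the rank formula from Theorem~\ref{thm:2-6}. Both conclusions follow from the decomposition displayed just before the corollary, namely
\[
C_{[n,m]}-C^{2}_{[n,m]}=\sum^{m}_{j=n}E_{n-1}P_{j}\left(I-E_{n-1}\right)P_{j}\big|_{M_{n-1}}+\mathcal{I}_{[n,m]}.
\]
That decomposition comes from expanding $P_{[n,m]}=\sum_{j}P_{j}$ inside the identity of Theorem~\ref{thm:2-6}. Under the hypothesis, every summand of $\mathcal{I}_{[n,m]}$ vanishes, so $\mathcal{I}_{[n,m]}=0$.

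It then remains to identify each diagonal term with $C_{j}-C_{j}^{2}$. I would repeat the computation from the proof of Theorem~\ref{thm:2-4} with $P_{n}$ replaced by $P_{j}$. Since $E_{n-1}$ acts as the identity on $M_{n-1}$ and $P_{j}^{2}=P_{j}$, we have on $M_{n-1}$
\[
C_{j}-C_{j}^{2}=E_{n-1}P_{j}E_{n-1}-E_{n-1}P_{j}E_{n-1}P_{j}E_{n-1}=E_{n-1}P_{j}\left(I-E_{n-1}\right)P_{j}\big|_{M_{n-1}}.
\]
This is valid because $P_{j}\le Q_{n-1}$ for $j\ge n$, so $P_{j}$ is an orthogonal projection on $Q_{n-1}K$. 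Summing over $j$ gives the first identity.

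For the finite-dimensional statement, I would substitute this identity into the rank formula of Theorem~\ref{thm:2-6}, which yields
\[
\dim\left(M_{m}/\left(M_{n-1}\cap Q_{m}K\right)\right)=\operatorname{rank}\left(\sum_{j=n}^{m}\left(C_{j}-C_{j}^{2}\right)\right).
\]
There is no real obstacle. The only point needing care is that the operator products are taken on $Q_{n-1}K$, with $E_{n-1}$ regarded as an operator there, so that the cross-term expansion is legitimate.
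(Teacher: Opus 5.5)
Your proposal is correct and follows the paper's proof essentially verbatim. Both expand $C_{[n,m]}-C_{[n,m]}^{2}=\sum_{i,j}E_{n-1}P_{i}(I-E_{n-1})P_{j}\big|_{M_{n-1}}$, use the hypothesis to drop the off-diagonal terms, identify each diagonal term with $C_{j}-C_{j}^{2}$, and then read the rank formula off Theorem~\ref{thm:2-6}.
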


\begin{proof}
The first identity follows by expanding 
\[
C_{\left[n,m\right]}-C^{2}_{\left[n,m\right]}=\sum^{m}_{i,j=n}E_{n-1}P_{i}\left(I-E_{n-1}\right)P_{j}\big|_{M_{n-1}}
\]
and using the assumed vanishing of the off-diagonal terms. For each
$j$, one has 
\[
C_{j}-C^{2}_{j}=E_{n-1}P_{j}\left(I-E_{n-1}\right)P_{j}\big|_{M_{n-1}}.
\]
This gives the asserted splitting. The finite-dimensional formula
follows from \prettyref{thm:2-6}. 
\end{proof}

Thus the block computation separates two contributions. The diagonal
terms are the one-coordinate contributions measured from the same
residual space, while $\mathcal{I}_{\left[n,m\right]}$ measures the
off-diagonal block terms created by compression.

This leads to a finite-dimensional realization problem. Suppose a
POVM is implemented through a sequential residual process, and suppose
the intermediate tail spaces are regarded as internal memory spaces.
Then the numbers $\dim M_{n}$ measure the amount of memory present
after the first $n$ outputs have been removed. The preceding results
show that non-projection residual decisions force additional tail
dimensions, and that blocks of outcomes carry off-diagonal compression
terms not visible from the one-coordinate updates alone.

\section{Iterating the residual transform}\label{sec:3}

The previous section studied one application of the ordered residual
recursion. We now iterate the construction. Starting from an ordered
POVM, one application replaces each original coordinate by the effect
obtained after the earlier tests have failed, and appends the remaining
mass as a terminal outcome. Repeating the transform progressively
moves the parts seen by earlier tests into terminal coordinates.

The main question is what remains at the original labels. The first
coordinate is fixed, the second can be computed directly, and under
natural hypotheses all original coordinates converge. The limiting
coordinates keep only the parts that survive all earlier residual
tests, while the remaining mass is collected in an escape coordinate.

Let $H$ be a Hilbert space, and let $A=\left(A_{1},A_{2},\ldots\right)$
be an ordered POVM on $H$. Apply the residual recursion \prettyref{eq:2-1}
to this ordered list, and denote the resulting extracted effects and
remainders by $\left(T_{k}\right)$ and $\left(R_{k}\right)$. Since
$\left(R_{k}\right)$ is decreasing, it has a strong limit $R_{\infty}=s\text{-}\lim_{k\to\infty}R_{k}$.
The identity $I=\sum^{n}_{k=1}T_{k}+R_{n}$ therefore gives $I\overset{s}{=}\sum_{k\geq1}T_{k}+R_{\infty}$.
Thus 
\[
\Psi\left(A\right):=\left(T_{1},T_{2},\ldots,R_{\infty}\right)
\]
is again an ordered POVM, with one terminal outcome added.

The coordinate $A_{k}$ and the coordinate $T_{k}$ have different
roles. The former is the $k$-th input test. The latter is the effect
produced when that test is applied after the earlier tests have failed. 

The fixed points of this transform, up to the appended zero terminal
coordinate, are the sharp ordered measurements. Indeed, if applying
$\Psi$ leaves all original coordinates unchanged and creates no terminal
mass, then each test must have been a projection on the part where
it is used. The next proposition makes this precise.
\begin{prop}
\label{prop:3-1}Let $A=\left(A_{1},A_{2},\ldots\right)$ be an ordered
POVM. Then 
\begin{equation}
\Psi\left(A\right)=\left(A_{1},A_{2},\ldots,0\right)\label{eq:3-1}
\end{equation}
if and only if $A$ is projection-valued, i.e., a PVM, after deleting
zero effects. 
\end{prop}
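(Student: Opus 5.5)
We prove the two implications separately. The forward direction is a direct induction on the recursion \prettyref{eq:2-1}. For the converse, we first show that the fixed-point identity forces the effects $A_{n}$ to be mutually orthogonal, and then use $\sum A_{n}=I$ to get idempotence. Zero effects play no role in either direction: deleting them does not change the residual recursion. So we may ignore them throughout.

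\emph{PVM implies fixed point.} Suppose the $A_{n}$ are mutually orthogonal projections with $I\overset{s}{=}\sum_{n}A_{n}$. We show by induction that
\[
R_{n}=I-\sum_{m\le n}A_{m}.
\]
This is a projection, and it dominates and commutes with every $A_{k}$ for $k>n$. For the inductive step, put $P=R_{n-1}$. Since $A_{n}\le P$ are commuting projections, $P^{1/2}=P$ and $PA_{n}P=A_{n}$. Hence $T_{n}=A_{n}$ and $R_{n}=P-A_{n}$. The strong sum condition then gives $R_{\infty}=I-\sum_{m}A_{m}=0$, which is \prettyref{eq:3-1}.

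\emph{Fixed point implies PVM.} Assume $T_{n}=A_{n}$ for all $n$ and $R_{\infty}=0$. Then $R_{n-1}=I-\sum_{m<n}A_{m}$ for every $n$, and the fixed-point condition reads
\[
A_{n}=C\,A_{n}\,C,\qquad C=R_{n-1}^{1/2},\quad 0\le C\le I.
\]
The key step is to extract support information from this operator identity. Iterating it gives $A_{n}=C^{k}A_{n}C^{k}$ for all $k$. By the spectral theorem, $C^{k}\to E_{n-1}$ strongly, where $E_{n-1}$ is the spectral projection of $R_{n-1}$ onto $\ker(I-R_{n-1})$. Passing to the limit in the weak operator topology on both sides gives
\[
A_{n}=E_{n-1}A_{n}E_{n-1},
\]
so $\overline{ran}\,A_{n}\subset\ker(I-R_{n-1})$.

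On $\ker(I-R_{n-1})$ we have $\sum_{m<n}A_{m}=0$. Since each $A_{m}\ge0$, this gives $A_{m}E_{n-1}=0$ for every $m<n$, and therefore $A_{m}A_{n}=A_{m}E_{n-1}A_{n}=0$. So the effects are mutually annihilating: $A_{m}A_{n}=0$ whenever $m\ne n$. Finally, $I\overset{s}{=}\sum_{m}A_{m}$ holds because $R_{\infty}=0$. Multiplying by $A_{n}$ gives
\[
A_{n}=A_{n}\sum_{m}A_{m}=A_{n}^{2},
\]
so each $A_{n}$ is a projection and $A$ is a PVM.

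I expect the main obstacle to be the step turning $A_{n}=R_{n-1}^{1/2}A_{n}R_{n-1}^{1/2}$ into $A_{n}=E_{n-1}A_{n}E_{n-1}$. A naive induction, which assumes the earlier $A_{m}$ are projections so that $R_{n-1}$ is a projection, does not by itself make $A_{n}$ a projection. The power-iteration and spectral-limit argument avoids this. It needs care only in justifying the weak-operator limit of $C^{k}A_{n}C^{k}$, which holds because $C^{k}\to E_{n-1}$ strongly and the operators involved are uniformly bounded.
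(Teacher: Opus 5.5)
Your proof is correct and follows the paper's argument essentially step for step. In the forward direction you show that $R_{k-1}=I-\sum_{j<k}A_j$ fixes $A_k$, making explicit the induction the paper leaves implicit. In the converse you iterate $A_k=R_{k-1}^{1/2}A_kR_{k-1}^{1/2}$, pass to the spectral projection onto $\ker(I-R_{k-1})=\bigcap_{j<k}\ker A_j$, deduce $A_iA_j=0$ for $i\neq j$, and conclude $A_k=A_k\sum_j A_j=A_k^2$.
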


\begin{proof}
If $A$ is a PVM, then the effects $A_{k}$ are pairwise orthogonal
projections and 
\begin{equation}
R_{k-1}=I-\sum_{j<k}A_{j}.\label{eq:3-2}
\end{equation}
Therefore $R_{k-1}A_{k}=A_{k}$, and so 
\begin{equation}
R^{1/2}_{k-1}A_{k}R^{1/2}_{k-1}=A_{k}.\label{eq:3-3}
\end{equation}
Also $R_{\infty}=0$. Hence \prettyref{eq:3-1} holds. 

Conversely, assume \prettyref{eq:3-1}. Then $T_{k}=A_{k}$ for every
$k$. Since $R_{k}=R_{k-1}-T_{k}$, it follows by induction that \prettyref{eq:3-2}
holds, and one also has \prettyref{eq:3-3}. 

Fix $k$. Since $0\leq R_{k-1}\leq I$, the contraction $R^{1/2}_{k-1}$
satisfies 
\[
A_{k}=R^{m/2}_{k-1}A_{k}R^{m/2}_{k-1}
\]
for every $m\geq1$. By the spectral theorem, $R^{m/2}_{k-1}\xrightarrow{\;s\;}P_{\ker\left(I-R_{k-1}\right)}$.
Hence 
\[
A_{k}=P_{\ker\left(I-R_{k-1}\right)}A_{k}P_{\ker\left(I-R_{k-1}\right)}.
\]
Thus $A_{k}$ is supported on the subspace on which $R_{k-1}=I$.

By \prettyref{eq:3-2}, this subspace is 
\[
\ker\left(\sum_{j<k}A_{j}\right)=\bigcap_{j<k}\ker A_{j}.
\]
Thus, if $i<j$, the operator $A_{j}$ is supported on $\ker A_{i}$,
and hence $A_{i}A_{j}=0$. Taking adjoints gives $A_{j}A_{i}=0$.

Finally, 
\[
A_{k}=A_{k}I=A_{k}\left(\sum_{j\geq1}A_{j}\right)=A^{2}_{k}.
\]
Thus each nonzero $A_{k}$ is a projection. 
\end{proof}

Fix an ordered POVM $A=\left(A_{1},A_{2},\ldots\right)$. Now define
the iterates of $\Psi$ by 
\[
A^{\left(0\right)}=A,\qquad A^{\left(m+1\right)}=\Psi\left(A^{\left(m\right)}\right).
\]
Each application of $\Psi$ appends a new terminal outcome $R^{\left(m\right)}_{\infty}$.
In the iterates, we keep the original coordinates in their original
order and place all terminal outcomes after them, in the order in
which they are created. Let $A^{\left(m\right)}_{k}$ denote the $k$-th
original coordinate of $A^{\left(m\right)}$.

The first coordinate is fixed by construction: 
\[
A^{\left(m\right)}_{1}=A_{1},\quad\forall m.
\]
The second coordinate has a simple general formula.
\begin{prop}
\label{prop:3-2}For every ordered POVM $A=\left(A_{1},A_{2},\ldots\right)$,
\begin{equation}
A^{\left(m\right)}_{2}=\left(I-A_{1}\right)^{m/2}A_{2}\left(I-A_{1}\right)^{m/2}.\label{eq:3-4}
\end{equation}
Consequently, 
\begin{equation}
A^{\left(m\right)}_{2}\xrightarrow[m\rightarrow\infty]{\;s\;}P_{\ker A_{1}}A_{2}P_{\ker A_{1}}.\label{eq:3-5}
\end{equation}
\end{prop}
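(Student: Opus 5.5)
Induct on $m$. The case $m=0$ is trivial. For the step, first describe the residual recursion applied to $A^{(m)}$. Its first coordinate is $A^{(m)}_{1}=A_{1}$, so the recursion gives $R_{0}=I$, $T_{1}=A_{1}$ and $R_{1}=I-A_{1}$. The second extracted effect is therefore
\[
T_{2}=(I-A_{1})^{1/2}A^{(m)}_{2}(I-A_{1})^{1/2}.
\]
By the convention that the original coordinates keep their order and the terminal outcomes are placed after them, $T_{2}$ is the second original coordinate of $A^{(m+1)}=\Psi(A^{(m)})$. Substituting the induction hypothesis $A^{(m)}_{2}=(I-A_{1})^{m/2}A_{2}(I-A_{1})^{m/2}$, and noting that powers of $I-A_{1}$ commute, gives \prettyref{eq:3-4} for $m+1$.

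The one point to check here is the bookkeeping. Each iterate must still begin with $A_{1}$ followed by the evolved second coordinate; the terminal outcomes appended so far sit after all original coordinates and never precede them. This is exactly the convention fixed just before the statement, together with the already noted fact $A^{(m)}_{1}=A_{1}$.

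For \prettyref{eq:3-5}, set $X=I-A_{1}$, so that $0\le X\le I$. By the spectral theorem, $X^{m/2}\to P_{\ker(I-X)}=P_{\ker A_{1}}$ strongly, as already used in the proof of \prettyref{prop:3-1}: the function $t^{m/2}$ tends pointwise and boundedly on $[0,1]$ to the indicator of $\{1\}$. Write $P=P_{\ker A_{1}}$. Then for $h\in H$ we split
\[
X^{m/2}A_{2}X^{m/2}h-PA_{2}Ph=X^{m/2}A_{2}\bigl(X^{m/2}-P\bigr)h+\bigl(X^{m/2}-P\bigr)A_{2}Ph.
\]
The first term has norm at most $\|(X^{m/2}-P)h\|\to0$, since $\|X^{m/2}A_{2}\|\le1$. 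The second term tends to $0$ by strong convergence applied to the fixed vector $A_{2}Ph$.

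The only mild obstacle is this last step: strong convergence is not multiplicative in general. It goes through here because the approximating family $X^{m/2}$ is uniformly bounded by $1$.
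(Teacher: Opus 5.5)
Your proof is correct and follows the paper's argument. Since the first coordinate is fixed, the residual before the second coordinate is always $I-A_{1}$, which gives the one-step recursion you iterate, and the limit follows from $(I-A_{1})^{m/2}\to P_{\ker A_{1}}$ strongly. Your explicit splitting, which uses the uniform bound $\|X^{m/2}A_{2}\|\le 1$ to pass strong convergence through the two-sided product, fills in a step the paper leaves implicit.
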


\begin{proof}
The first coordinate does not change under $\Psi$. Hence the residual
before the second coordinate is always $I-A_{1}$. Therefore 
\[
A^{\left(m+1\right)}_{2}=\left(I-A_{1}\right)^{1/2}A^{\left(m\right)}_{2}\left(I-A_{1}\right)^{1/2}.
\]
Iterating gives \prettyref{eq:3-4}. 

By the spectral theorem, powers of a positive contraction converge
strongly to the spectral projection at $1$. Since $0\leq I-A_{1}\leq I$,
we have $\left(I-A_{1}\right)^{m/2}\xrightarrow{\;s\;}P_{\ker A_{1}}$,
so \prettyref{eq:3-5} follows. 
\end{proof}

Thus the second coordinate retains only the part of $A_{2}$ supported
on $\ker A_{1}$. The rest is pushed into terminal outcomes farther
out in the iteration.

For more coordinates, one obtains a clean statement under commutativity.
\begin{prop}
\label{prop:3-3}Assume that the effects $A_{1},A_{2},\ldots$ commute.
For each $k\geq1$, 
\[
A^{\left(m\right)}_{k}\xrightarrow{\;s\;}P_{k-1}A_{k}P_{k-1}
\]
where 
\[
P_{k-1}=P_{\bigcap_{j<k}\ker A_{j}}.
\]
\end{prop}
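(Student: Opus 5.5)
In the commuting case every iterate can be computed in closed form by functional calculus, so the plan is to find that formula by induction on $m$ and then pass to the limit with the spectral theorem, as in \prettyref{prop:3-2}.

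\textbf{Step 1 (commutativity is preserved).} First I would check that the original coordinates of every iterate $A^{(m)}$ commute with each other and with all the $A_j$. The residual recursion \prettyref{eq:2-1} only forms products, square roots and sums of the current coordinates. Hence all $R_k$, $T_k$ and $R_\infty$ lie in the abelian von Neumann algebra generated by $A_1,A_2,\dots$. Since the original coordinates occupy the first slots of $A^{(m)}$, appending terminal outcomes does not affect them.

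\textbf{Step 2 (closed form).} In the commuting case one step of $\Psi$ acts on the original coordinates by
\[
A^{(m+1)}_k=A^{(m)}_k\,\Big(I-\sum_{j<k}A^{(m)}_j\Big),
\]
because $R_{k-1}=I-\sum_{j<k}T_j$ commutes with $A^{(m)}_k$. For $k=2$ this gives $A_2(I-A_1)^m$, which agrees with \prettyref{prop:3-2}. In general I would set
\[
D^{(m)}_k:=I-\sum_{j<k}A^{(m)}_j ,
\]
so that
\[
D^{(m+1)}_{k+1}=D^{(m+1)}_k-A^{(m)}_kD^{(m)}_k .
\]
I would try to prove by induction on $k$ that
\[
A^{(m)}_k=A_k\prod_{i=0}^{m-1}D^{(i)}_k ,
\]
where each $D^{(i)}_k$ is a positive contraction in the commutative algebra.

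\textbf{Step 3 (limit).} I would then show that the products $\prod_{i<m}D^{(i)}_k$ decrease and converge strongly to $P_{k-1}$. This is the main obstacle. In the Gelfand picture all operators become functions on a common spectral space, with $a_j\in[0,1]$ and $\sum a_j\le 1$. Pointwise, $a^{(m)}_1=a_1$, and $a^{(m)}_k\to a_k\cdot\mathbf 1[a_1=\dots=a_{k-1}=0]$. I would prove this pointwise claim by induction on $k$:
\begin{itemize}
\item If $a_1=\dots=a_{k-1}=0$ at a point, then all earlier iterates vanish there, $d^{(i)}_k=1$, and $a^{(m)}_k=a_k$ for every $m$.
\item Otherwise, let $j<k$ be minimal with $a_j>0$. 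By induction $a^{(i)}_j\to a_j>0$, while the later earlier coordinates tend to $0$. Hence $d^{(i)}_k\to 1-a_j<1$, so the product tends to $0$. (Here $d^{(i)}_k$ denotes the function representing $D^{(i)}_k$.)
\end{itemize}
Everything stays between $0$ and $1$, so bounded pointwise convergence on the spectral measure space gives strong convergence of the operators via the bounded Borel functional calculus, with dominated convergence applied to $\|f_m(\cdot)h\|^2$.

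\textbf{Step 4 (identify the limit).} Finally, $\mathbf 1[a_1=\dots=a_{k-1}=0]$ is the spectral projection onto $\bigcap_{j<k}\ker A_j$, which is $P_{k-1}$. Since $P_{k-1}$ commutes with $A_k$, the limit equals $P_{k-1}A_kP_{k-1}$.

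The delicate point is the case $k\ge 3$ at points where an early coordinate is positive. There the factors $d^{(i)}_k$ depend on $i$ through the earlier evolving coordinates, so the induction must control their limit rather than use a fixed power. The Gelfand reduction avoids any operator-norm estimates.
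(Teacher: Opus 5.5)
Your overall strategy matches the paper's: pass to a multiplication representation of the abelian algebra, prove the pointwise limit $a_k\mathbf{1}[a_1=\cdots=a_{k-1}=0]$ by induction on $k$ using the first index $j<k$ with $a_j>0$, and conclude by dominated convergence. However, the one-step formula in Step 2 is wrong.

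In the passage from $A^{(m)}$ to $A^{(m+1)}$, the residual before the $k$-th coordinate is $R_{k-1}=I-\sum_{j<k}T_j$ with $T_j=A^{(m+1)}_j$, not $A^{(m)}_j$. In the commuting case it equals $\prod_{j<k}(I-A^{(m)}_j)$. The operator $I-\sum_{j<k}A^{(m)}_j$ differs from this by cross terms once $k\ge 3$. For the scalar POVM $(\tfrac12,\tfrac14,\tfrac14)$ the true value is $a^{(1)}_3=\tfrac14\cdot\tfrac12\cdot\tfrac34=\tfrac{3}{32}$, while your formula gives $\tfrac14(1-\tfrac12-\tfrac14)=\tfrac{1}{16}$. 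Your check at $k=2$ passes only because $A_1$ never changes. So the closed form $A^{(m)}_k=A_k\prod_{i=0}^{m-1}D^{(i)}_k$ and your recursion for $D^{(m+1)}_{k+1}$ are false as stated. The correct versions are $A^{(m+1)}_k=A^{(m)}_kD^{(m+1)}_k$, hence $A^{(m)}_k=A_k\prod_{i=1}^{m}D^{(i)}_k$, together with the multiplicative recursion $D^{(m+1)}_{k+1}=D^{(m+1)}_k(I-A^{(m)}_k)$.

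The repair is harmless. Step 3 only uses three facts about the factors: they lie in $[0,1]$, they equal $1$ where $a_1=\cdots=a_{k-1}=0$, and they are bounded away from $1$ elsewhere. All three survive the index shift.

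The ``delicate point'' you flag also disappears. At a point where $j$ is the first index with $a_j>0$, all earlier coordinates vanish there for every iterate, so $a^{(i)}_j=a_j$ for all $i$. Every corrected factor $\prod_{\ell<k}(1-a^{(i-1)}_\ell)$ is therefore at most $1-a_j$. This gives $0\le a^{(m)}_k\le a_k(1-a_j)^m$ directly, with no need to control the limits of the intermediate coordinates. This geometric bound is exactly how the paper argues, working with the correct scalar recursion $a^{(m+1)}_k=a^{(m)}_k\prod_{j<k}(1-a^{(m)}_j)$.
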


\begin{proof}
Since the effects commute, the von Neumann algebra they generate is
abelian. We may therefore represent the $A_{j}$'s as multiplication
by functions $a_{j}$, with $0\leq a_{j}\leq1$ and $\sum_{j\geq1}a_{j}=1$
almost everywhere.

For scalar values, we have
\[
a=\left(a_{1},a_{2},\ldots\right)\xrightarrow{\;\Psi\;}t=\left(t_{1},t_{2},\ldots,r_{\infty}\right)
\]
where 
\[
t_{k}=a_{k}\prod_{j<k}\left(1-a_{j}\right).
\]
Hence the $k$-th original coordinate under iteration satisfies 
\[
a^{\left(m+1\right)}_{k}=a^{\left(m\right)}_{k}\prod_{j<k}\left(1-a^{\left(m\right)}_{j}\right).
\]

We prove by induction on $k$ that 
\[
a^{\left(m\right)}_{k}\to a_{k}\mathbf{1}_{\left\{ a_{1}=\cdots=a_{k-1}=0\right\} }.
\]
For $k=1$, $a^{\left(m\right)}_{1}=a_{1}$. Suppose the claim has
been proved for all indices $j<k$. Fix a point in the scalar representation.

If $a_{1}=\cdots=a_{k-1}=0$, then the earlier coordinates remain
zero at every step, and therefore 
\[
a^{\left(m\right)}_{k}=a_{k}
\]
for every $m$.

If not, let $j<k$ be the first index with $a_{j}>0$. Then $a_{i}=0$
for $i<j$, so $a^{\left(m\right)}_{j}=a_{j}$ for every $m$. Thus
each step multiplies $a^{\left(m\right)}_{k}$ by a factor no larger
than $1-a_{j}$. Hence 
\[
0\leq a^{\left(m\right)}_{k}\leq a_{k}\left(1-a_{j}\right)^{m},
\]
and so $a^{\left(m\right)}_{k}\to0$.

This proves the scalar convergence. Returning to the multiplication
representation gives 
\[
A^{\left(m\right)}_{k}\to P_{\bigcap_{j<k}\ker A_{j}}A_{k}P_{\bigcap_{j<k}\ker A_{j}}
\]
strongly, by dominated convergence. 
\end{proof}

\begin{cor}
\label{cor:3-4}In the commuting case the finite-coordinate limit
is therefore 
\[
B_{k}=P_{k-1}A_{k}P_{k-1},\qquad P_{k-1}=P_{\bigcap_{j<k}\ker A_{j}}.
\]
The remaining effect is 
\[
B_{\infty}=I-\sum_{k\geq1}B_{k}.
\]
Thus the limiting object, after adjoining the escape effect, is 
\[
\left(B_{1},B_{2},\ldots,B_{\infty}\right).
\]
\end{cor}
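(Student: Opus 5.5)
The plan is to read off the corollary from \prettyref{prop:3-3} and then confirm that $(B_1,B_2,\ldots,B_\infty)$ is a genuine POVM, so that the ``escape effect'' is a positive contraction. The finite-coordinate limit $B_k=P_{k-1}A_kP_{k-1}$ is exactly the strong limit obtained in \prettyref{prop:3-3}. What remains is to show that $B_\infty:=I-\sum_k B_k$ is well defined (the series converges strongly) and that $0\le B_\infty\le I$.

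I will work in the same scalar (multiplication) representation used in the proof of \prettyref{prop:3-3}. There $P_{k-1}$ is multiplication by $\mathbf{1}_{\{a_1=\cdots=a_{k-1}=0\}}$, and it commutes with $A_k$. Hence $B_k$ is multiplication by
\[
b_k=a_k\mathbf{1}_{\{a_1=\cdots=a_{k-1}=0\}}.
\]
At each point, at most one $b_k$ is nonzero, namely the one for the first index $k$ with $a_k>0$. So $0\le\sum_k b_k\le 1$ pointwise. Since the partial sums are increasing, the series $\sum_k B_k$ converges strongly (monotone or dominated convergence) to multiplication by $\sum_k b_k$. Thus $B_\infty$ is multiplication by $1-\sum_k b_k\in[0,1]$, which makes it a positive contraction, and $(B_1,B_2,\ldots,B_\infty)$ sums strongly to $I$.

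As an optional refinement, one can identify $B_\infty$ with the strong limit of the total terminal mass $I-\sum_k A^{(m)}_k$. Each iterate $A^{(m)}$ is an ordered POVM, so $\sum_k A^{(m)}_k\le I$, and the terminal outcomes account for the rest. Pointwise, $\sum_k a^{(m)}_k\to\sum_k b_k$ by dominated convergence in $k$, using the bound $a^{(m)}_k\le a_k$ and $\sum_k a_k=1$. Returning to operators, dominated convergence over the measure space then gives strong convergence.

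I expect the only delicate point to be interchanging the limit in $m$ with the infinite sum over $k$. The domination $0\le a^{(m)}_k\le a_k$, with $\sum_k a_k=1$ summable, handles this directly.
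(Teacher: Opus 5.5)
Your proposal is correct and follows essentially the paper's route: the coordinatewise limit is quoted from Proposition~\ref{prop:3-3}, and $B_\infty$ is obtained from the strong convergence of the increasing partial sums $\sum_{k\le N}B_k\le I$. You also make that bound explicit (at each point at most one $b_k$ is nonzero), which the paper states without justification, and your optional identification of $B_\infty$ with the strong limit of the terminal mass $I-\sum_k A^{(m)}_k$ is a correct addition that the paper's proof does not contain.
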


\begin{proof}
The coordinatewise convergence is \prettyref{prop:3-3}. The partial
sums $\sum^{N}_{k=1}B_{k}$ are bounded above by $I$, so the strong
sum $\sum_{k\geq1}B_{k}$ exists as a positive contraction. Hence
$B_{\infty}=I-\sum_{k\geq1}B_{k}$ is positive, and adjoining it gives
the stated POVM.
\end{proof}

We next remove the commutativity assumption in \prettyref{prop:3-3}
under a closed-range condition on the ordered kernel filtration.

For $k\geq1$, set 
\[
F_{k-1}=\bigcap_{j<k}\ker A_{j},\qquad P_{k-1}=P_{F_{k-1}}.
\]
For $k\geq2$, define 
\[
G_{k-1}=\sum_{j<k}P_{j-1}A_{j}P_{j-1}.
\]
Then $G_{k-1}$ is positive. The hypothesis below says that $G_{k-1}$
is bounded below on $F^{\perp}_{k-1}$.
\begin{thm}
\label{thm:3-5} Let $A=\left(A_{1},A_{2},\ldots\right)$ be an ordered
POVM on a Hilbert space $H$. Fix $k\geq1$. Suppose that, for every
$2\leq r\leq k$, there is $\varepsilon_{r}>0$ such that 
\[
G_{r-1}\geq\varepsilon_{r}\left(I-P_{r-1}\right).
\]
Then 
\[
A^{\left(m\right)}_{k}\xrightarrow[m\to\infty]{\left\Vert \cdot\right\Vert }P_{k-1}A_{k}P_{k-1}.
\]
\end{thm}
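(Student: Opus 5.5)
The plan is to prove the statement by strong induction on $k$, proving at each stage norm convergence $A^{(m)}_j\to P_{j-1}A_jP_{j-1}$ for all $j\le k$. For $k=1$ there is nothing to prove, since $A^{(m)}_1=A_1$ and $P_0=I$. Assume the claim holds for all $j<k$; the hypotheses for $r\le k-1$ are part of the hypotheses for $k$, so this is legitimate.

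\textbf{Step 1: write the iterates as a product.} Write $R^{(m)}_{k-1}=I-\sum_{j<k}A^{(m)}_j$ for the residual before coordinate $k$ at iteration $m$, and put $Y_m=(R^{(m)}_{k-1})^{1/2}$. By definition of $\Psi$,
\[
A^{(m+1)}_k=Y_mA^{(m)}_kY_m,\qquad\text{so}\qquad A^{(m)}_k=X_m^{*}A_kX_m,\quad X_m=Y_0Y_1\cdots Y_{m-1}.
\]
It therefore suffices to show $X_m\to P_{k-1}$ in norm. Then $X_m^{*}A_kX_m\to P_{k-1}A_kP_{k-1}$ in norm, because all factors are contractions.

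\textbf{Step 2: block structure relative to $F_{k-1}$.} I will check by induction on $m$ that $F_{k-1}\subset\ker A^{(m)}_j$ for every $j<k$. Take $x\in F_{k-1}$. We have $F_{k-1}\subset F_{j-1}$. The inductive claim for $j'<j$ gives $R^{(m)}_{j-1}x=x$, hence $(R^{(m)}_{j-1})^{1/2}x=x$. Then
\[
A^{(m+1)}_jx=(R^{(m)}_{j-1})^{1/2}A^{(m)}_jx=0 .
\]
Consequently $R^{(m)}_{k-1}$ is the identity on $F_{k-1}$. Since it is self-adjoint, $F_{k-1}$ reduces it. Hence
\[
Y_m=P_{k-1}+Z_m,\qquad Z_m=(I-P_{k-1})Y_m(I-P_{k-1}),
\]
and $X_m=P_{k-1}+Z_0Z_1\cdots Z_{m-1}$. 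This block decomposition is what removes the noncommutativity problem: the products of the $Z_i$ never mix with $P_{k-1}$.

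\textbf{Step 3: uniform contraction off $F_{k-1}$.} This is the main step, and it is where the hypothesis enters. By the induction hypothesis, $R^{(m)}_{k-1}\to I-G_{k-1}$ in norm. Each term $P_{j-1}A_jP_{j-1}$ with $j<k$ vanishes on $F_{k-1}$, because $F_{k-1}\subset F_{j-1}\cap\ker A_j$. So $G_{k-1}$ is supported on $F_{k-1}^{\perp}$. Moreover $0\le G_{k-1}\le I$, as a norm limit of partial sums of POVM coordinates. The assumption $G_{k-1}\ge\varepsilon_k(I-P_{k-1})$ then gives
\[
0\le(I-P_{k-1})(I-G_{k-1})(I-P_{k-1})\le(1-\varepsilon_k)(I-P_{k-1}).
\]
The square root is norm continuous on positive contractions. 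Therefore $Z_m\to\bigl((I-P_{k-1})(I-G_{k-1})(I-P_{k-1})\bigr)^{1/2}$ in norm, and this limit has norm at most $\sqrt{1-\varepsilon_k}<1$.

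\textbf{Step 4: conclude.} Fix $q$ with $\sqrt{1-\varepsilon_k}<q<1$. By Step 3 there is $N$ with $\|Z_i\|\le q$ for all $i\ge N$. Since every $\|Z_i\|\le1$,
\[
\|Z_0\cdots Z_{m-1}\|\le q^{\,m-N}\to0 .
\]
Hence $X_m\to P_{k-1}$ in norm, and Step 1 gives the claim for $k$. The only delicate point I expect is Step 2, the exact invariance of $F_{k-1}$. Without it, the $Y_m$ converge only asymptotically to the block form, and an infinite noncommuting product of near-projections would be hard to control. With exact invariance, the argument reduces to the scalar estimate $q^{m-N}\to0$.
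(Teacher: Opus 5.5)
Your proof is correct and follows the paper's argument essentially step for step. Both use induction on $k$, the exact invariance of $F_{k-1}$ under the residuals (so they reduce $F_{k-1}$), norm convergence of the residuals to $I-G_{k-1}$, and a geometric bound on the product of square roots on $F_{k-1}^{\perp}$ showing that this product tends to $P_{k-1}$ in norm.

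There is one small indexing slip, and it does no harm. The residual before coordinate $k$ in the passage $A^{(m)}\to A^{(m+1)}$ is $I-\sum_{j<k}A^{(m+1)}_{j}$, not $I-\sum_{j<k}A^{(m)}_{j}$. This corrected sequence also fixes $F_{k-1}$ and also converges in norm to $I-G_{k-1}$, so the rest of your argument goes through unchanged.
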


\begin{proof}
We argue by induction on $k$. For $k=1$, the assertion follows from
$A^{\left(m\right)}_{1}=A_{1}$ for all $m$.

Assume the assertion has been proved for all original coordinates
$j<k$. During the passage from $A^{\left(m\right)}$ to $A^{\left(m+1\right)}$,
the residual before the $k$-th original coordinate is 
\[
S_{k,m}=I-\sum_{j<k}A^{\left(m+1\right)}_{j}.
\]
Thus 
\[
A^{\left(m+1\right)}_{k}=S^{1/2}_{k,m}A^{\left(m\right)}_{k}S^{1/2}_{k,m}.
\]
By the induction hypothesis, $A^{\left(m+1\right)}_{j}$ converges
in norm to $P_{j-1}A_{j}P_{j-1}$ for every $j<k$. Hence $S_{k,m}$
converges in norm to 
\[
S_{k,\infty}=I-\sum_{j<k}P_{j-1}A_{j}P_{j-1}=I-G_{k-1}.
\]

We claim that $\ker G_{k-1}=F_{k-1}$. If $x\in F_{k-1}$, then $P_{j-1}A_{j}P_{j-1}x=0$
for every $j<k$, so $G_{k-1}x=0$. Conversely, suppose $G_{k-1}x=0$.
Since $G_{k-1}$ is a finite sum of positive operators, 
\[
\left\langle x,P_{j-1}A_{j}P_{j-1}x\right\rangle =0
\]
for every $j<k$. For $j=1$, this gives $\left\langle x,A_{1}x\right\rangle =0$,
hence $A_{1}x=0$. Thus $x\in F_{1}$ and $P_{1}x=x$. The relation
with $j=2$ then gives 
\[
\left\langle x,A_{2}x\right\rangle =\left\langle x,P_{1}A_{2}P_{1}x\right\rangle =0,
\]
so $A_{2}x=0$. Continuing in this way gives 
\[
A_{1}x=A_{2}x=\cdots=A_{k-1}x=0.
\]
Therefore $x\in F_{k-1}$, and the claim follows.

By the hypothesis at level $k$, 
\[
G_{k-1}\geq\varepsilon_{k}\left(I-P_{k-1}\right).
\]
Hence 
\[
S_{k,\infty}\leq P_{k-1}+\left(1-\varepsilon_{k}\right)\left(I-P_{k-1}\right).
\]

We next show that $F_{k-1}$ is fixed by each $S_{k,m}$. Let $x\in F_{k-1}$.
We prove by induction on $m$ that $A^{\left(m\right)}_{j}x=0$ for
every $j<k$. This is true for $m=0$. Suppose it is true at stage
$m$. Then, for each $j<k$, the residual before the $j$-th original
coordinate in the passage from $A^{\left(m\right)}$ to $A^{\left(m+1\right)}$
fixes $x$, because all earlier original coordinates annihilate $x$.
Therefore 
\[
A^{\left(m+1\right)}_{j}x=0.
\]
Thus $A^{\left(m\right)}_{j}x=0$ for every $j<k$ and every $m\geq0$.
It follows that $S_{k,m}x=x$ for every $m$. Since $S_{k,m}$ is
self-adjoint, $F_{k-1}$ reduces $S_{k,m}$ and hence also reduces
$S^{1/2}_{k,m}$.

Since $S_{k,m}\to S_{k,\infty}$ in norm and the operators reduce
$F_{k-1}$, there are $m_{0}\geq0$ and $0<\rho<1$ such that 
\[
\left\Vert S^{1/2}_{k,m}\big|_{F^{\perp}_{k-1}}\right\Vert \leq\rho
\]
for every $m\geq m_{0}$.

Define 
\[
Z_{k,m}=S^{1/2}_{k,m-1}S^{1/2}_{k,m-2}\cdots S^{1/2}_{k,0}.
\]
Then 
\[
A^{\left(m\right)}_{k}=Z_{k,m}A_{k}Z^{*}_{k,m}.
\]
The product $Z_{k,m}$ acts as the identity on $F_{k-1}$. On $F^{\perp}_{k-1}$,
its tail has norm at most a constant times $\rho^{m-m_{0}}$. Hence
\[
Z_{k,m}\xrightarrow[m\to\infty]{\left\Vert \cdot\right\Vert }P_{k-1}.
\]
It follows that 
\[
A^{\left(m\right)}_{k}=Z_{k,m}A_{k}Z^{*}_{k,m}\xrightarrow[m\to\infty]{\left\Vert \cdot\right\Vert }P_{k-1}A_{k}P_{k-1}.
\]
This completes the induction. 
\end{proof}

\begin{cor}
\label{cor:3-6} Let $H$ be finite-dimensional, and let $A=\left(A_{1},A_{2},\ldots\right)$
be an ordered POVM on $H$. Then, for every fixed $k\geq1$, 
\[
A^{\left(m\right)}_{k}\xrightarrow[m\to\infty]{\left\Vert \cdot\right\Vert }P_{k-1}A_{k}P_{k-1}.
\]
\end{cor}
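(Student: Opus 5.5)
The plan is to deduce the corollary directly from \prettyref{thm:3-5}. It suffices to check that in finite dimension the closed-range hypothesis holds automatically for every $2\le r\le k$. That is, we must find $\varepsilon_r>0$ with $G_{r-1}\ge\varepsilon_r(I-P_{r-1})$.

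The key input is the identity $\ker G_{r-1}=F_{r-1}$, established inside the proof of \prettyref{thm:3-5}. That argument uses only positivity of the summands $P_{j-1}A_jP_{j-1}$ and the inductive peeling $A_1x=0$, $P_1x=x$, $A_2x=0,\dots$, so it holds for every $r$ without any hypothesis. Since $G_{r-1}$ is a positive operator on the finite-dimensional space $H$, it is self-adjoint and diagonalizable. Its kernel is $F_{r-1}$, so $G_{r-1}$ is reduced by $F_{r-1}^{\perp}$ and restricts there to a positive invertible operator.

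Let $\varepsilon_r$ be the smallest nonzero eigenvalue of $G_{r-1}$, or set $\varepsilon_r=1$ if $G_{r-1}=0$, in which case $F_{r-1}=H$ and $I-P_{r-1}=0$. Then
\[
G_{r-1}=G_{r-1}(I-P_{r-1})\ge\varepsilon_r(I-P_{r-1}),
\]
since $G_{r-1}$ commutes with $P_{r-1}$, the projection onto its kernel. With these $\varepsilon_r$, \prettyref{thm:3-5} applies for each fixed $k$ and gives the norm convergence $A^{(m)}_k\to P_{k-1}A_kP_{k-1}$.

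No step is a real obstacle. The only point to watch is that the kernel identification $\ker G_{r-1}=F_{r-1}$ is hypothesis-free, so it can be quoted for all $r\le k$ before invoking \prettyref{thm:3-5}. One should also note that the ordered POVM may have infinitely many coordinates even though $H$ is finite-dimensional. This causes no difficulty, because only the finitely many coordinates $A_1,\dots,A_k$ enter.
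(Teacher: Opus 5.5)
Your proposal is correct and follows the paper's proof: you quote the hypothesis-free identity $\ker G_{r-1}=F_{r-1}$ from the proof of Theorem~\ref{thm:3-5}, use finite dimensionality to obtain $G_{r-1}\ge\varepsilon_r(I-P_{r-1})$, and then invoke Theorem~\ref{thm:3-5}. Your choice of $\varepsilon_r$ as the smallest nonzero eigenvalue, with the case $G_{r-1}=0$ handled separately, simply spells out the paper's remark that $G_{r-1}$ is bounded below on $F_{r-1}^{\perp}$.
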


\begin{proof}
Fix $k\geq1$. For each $2\leq r\leq k$, the operator 
\[
G_{r-1}=\sum_{j<r}P_{j-1}A_{j}P_{j-1}
\]
is positive and satisfies $\ker G_{r-1}=F_{r-1}$ by the argument
in the proof of \prettyref{thm:3-5}. Since $H$ is finite-dimensional,
$G_{r-1}$ is bounded below on $F^{\perp}_{r-1}$. Hence there is
$\varepsilon_{r}>0$ such that 
\[
G_{r-1}\geq\varepsilon_{r}\left(I-P_{r-1}\right).
\]
The result follows from \prettyref{thm:3-5}. 
\end{proof}

Thus, in finite dimension, no commutativity assumption is needed.
The residual iteration may have noncommutative transient behavior,
but the original coordinates converge to the ordered kernel compressions.

The gap condition in \prettyref{thm:3-5} cannot be removed if one
wants norm convergence, even for commuting effects.
\begin{example}
\label{ex:3-7} Let $H=\ell^{2}\left(\mathbb{N}\right)$ with standard
basis $\left(e_{n}\right)_{n\geq1}$. Define 
\[
A_{1}e_{n}=\frac{1}{n}e_{n},\qquad A_{2}=I-A_{1}.
\]
Then $A=\left(A_{1},A_{2}\right)$ is a commuting ordered POVM. Since
$\ker A_{1}=\left\{ 0\right\} $, \prettyref{prop:3-2} gives 
\[
A^{\left(m\right)}_{2}=\left(I-A_{1}\right)^{m/2}A_{2}\left(I-A_{1}\right)^{m/2}=\left(I-A_{1}\right)^{m+1}\xrightarrow[m\to\infty]{s}0.
\]
Indeed, for each fixed $n$, 
\[
\left(I-A_{1}\right)^{m+1}e_{n}=\left(1-\frac{1}{n}\right)^{m+1}e_{n}\to0.
\]
Thus the convergence is strong. It is not norm convergence, since
\[
\left\Vert A^{\left(m\right)}_{2}\right\Vert =\sup_{n\geq1}\left(1-\frac{1}{n}\right)^{m+1}=1
\]
for every $m$.

Here $G_{1}=A_{1}$ and $P_{1}=P_{\ker A_{1}}=0$. The estimate $A_{1}\geq\varepsilon I$
fails for every $\varepsilon>0$. Thus the closed-range hypothesis
in \prettyref{thm:3-5} is absent, and the strong limit from \prettyref{prop:3-3}
need not improve to a norm limit. 
\end{example}

\begin{rem}
\label{rem:3-8} The preceding results give a useful way to interpret
the iteration. Starting from an ordered POVM $A=\left(A_{1},A_{2},\ldots\right)$,
the residual transform does not preserve the original labeled effects.
Instead, it reallocates mass according to the imposed order. Under
the hypotheses of \prettyref{prop:3-3}, \prettyref{thm:3-5}, or
\prettyref{cor:3-6}, the original coordinates converge to 
\[
B_{k}=P_{k-1}A_{k}P_{k-1},\qquad P_{k-1}=P_{\bigcap_{j<k}\ker A_{j}}.
\]
Thus $B_{k}$ is the part of $A_{k}$ supported on the subspace where
all earlier effects vanish. The remaining mass is pushed into terminal
outcomes. If these terminal outcomes are gathered into one escape
effect, the limiting collapsed POVM has the form 
\[
\left(B_{1},B_{2},\ldots,B_{\mathrm{esc}}\right),\qquad B_{\mathrm{esc}}=I-\sum_{k\geq1}B_{k},
\]
whenever the sum is understood in the relevant topology. Equivalently,
the collapsed limit of the iterates has the form 
\[
\left(A_{1},A_{2},\ldots\right)\longmapsto\left(P_{0}A_{1}P_{0},P_{1}A_{2}P_{1},\ldots,I-\sum_{k\geq1}P_{k-1}A_{k}P_{k-1}\right).
\]
The limit is not, in general, an equivalent measurement with the same
labeled statistics. Rather, it is an order-sensitive sharpening of
the POVM. It keeps at label $k$ only the part that can survive all
earlier residual tests, and it moves the overlap-dependent mass into
the terminal sector. 
\end{rem}

\begin{thm}
\label{thm:3-9} Assume that the collapsed limiting POVM in \prettyref{rem:3-8}
is defined. Then the surviving original coordinates are pairwise orthogonal.
If $i\neq j$, then 
\[
B_{i}B_{j}=0.
\]
Moreover, each $B_{i}$ commutes with $B_{\mathrm{esc}}$, and 
\[
B_{i}B_{\mathrm{esc}}=B_{\mathrm{esc}}B_{i}=B_{i}-B^{2}_{i}.
\]
Consequently, the collapsed limiting POVM 
\[
\left(B_{1},B_{2},\ldots,B_{\mathrm{esc}}\right)
\]
is commutative.
\end{thm}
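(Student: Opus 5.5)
Fix $i<j$ and work with the support of each limiting coordinate. By definition $B_j=P_{j-1}A_jP_{j-1}$, where $P_{j-1}$ is the projection onto $F_{j-1}=\bigcap_{l<j}\ker A_l$. Since $i<j$, we have $F_{j-1}\subset\ker A_i$, so $A_iP_{j-1}=0$, and taking adjoints gives $P_{j-1}A_i=0$.

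\textbf{Orthogonality.} We also have $F_{j-1}\subset F_{i-1}$, hence $P_{i-1}P_{j-1}=P_{j-1}$. Therefore
\[
B_iB_j=P_{i-1}A_iP_{i-1}P_{j-1}A_jP_{j-1}=P_{i-1}A_iP_{j-1}A_jP_{j-1}=0,
\]
and taking adjoints gives $B_jB_i=0$. So the surviving coordinates are pairwise orthogonal.

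\textbf{Commutation with the escape effect.} Write $B_{\mathrm{esc}}=I-\sum_kB_k$, where the sum converges strongly because the partial sums are increasing and bounded by $I$, as in \prettyref{cor:3-4}. Fix $i$ and use the orthogonality just proved. Multiplication on the left by the bounded operator $B_i$ preserves strong convergence of the partial sums. This gives $B_i\sum_kB_k=\sum_kB_iB_k=B_i^2$, where every term with $k\neq i$ vanishes. Hence $B_iB_{\mathrm{esc}}=B_i-B_i^2$. The same computation with multiplication on the right gives $B_{\mathrm{esc}}B_i=B_i-B_i^2$; alternatively, this follows by taking adjoints, since $B_i-B_i^2$ is self-adjoint.

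\textbf{Commutativity of the family.} The $B_i$ commute pairwise, since all their mutual products are zero. Each $B_i$ commutes with $B_{\mathrm{esc}}$ by the previous step. Hence the collapsed POVM $(B_1,B_2,\ldots,B_{\mathrm{esc}})$ is commutative.

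The only point needing care is the phrase ``assume the collapsed limiting POVM is defined.'' I would read it as fixing $B_k=P_{k-1}A_kP_{k-1}$, with $\sum_kB_k$ taken strongly. With that reading, the argument does not need any of the convergence hypotheses of \prettyref{thm:3-5}. I expect no real obstacle beyond justifying the interchange of $B_i$ with the strong sum, which is immediate because multiplication by a fixed bounded operator is strongly continuous.
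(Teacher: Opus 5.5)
Your proof is correct and follows the paper's argument essentially line for line. You use the inclusion $F_{j-1}\subset\ker A_i\cap F_{i-1}$ to kill $B_iB_j$, and then multiply the strongly convergent sum $\sum_k B_k$ by $B_i$ on either side to get $B_iB_{\mathrm{esc}}=B_{\mathrm{esc}}B_i=B_i-B_i^2$. One small point: the bound $\sum_{k\le N}B_k\le I$, which makes the strong sum exist, is best justified by the orthogonality you have just proved, since the $B_k$ act on mutually orthogonal subspaces. Appealing to the commuting setting of Corollary~\ref{cor:3-4} does not cover the general case.
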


\begin{proof}
It is enough to consider $i<j$. Since 
\[
B_{j}=P_{j-1}A_{j}P_{j-1},
\]
we have 
\[
B_{j}H\subset F_{j-1}=\bigcap_{\ell<j}\ker A_{\ell}\subset\ker A_{i}.
\]
Also $F_{j-1}\subset F_{i-1}$, so $P_{i-1}B_{j}=B_{j}$. Therefore
\[
B_{i}B_{j}=P_{i-1}A_{i}P_{i-1}B_{j}=P_{i-1}A_{i}B_{j}=0.
\]
Since the $B_{k}$'s are self-adjoint, the same conclusion holds when
$j<i$. Hence the surviving original coordinates are pairwise orthogonal.

Let $S=\sum_{k\geq1}B_{k}$, where the sum converges in the strong
operator topology. Since the $B_{k}$'s are pairwise orthogonal, for
each fixed $i$ we have 
\[
B_{i}S=SB_{i}=B^{2}_{i}.
\]
This follows first for finite partial sums and then by strong convergence.
Since 
\[
B_{\mathrm{esc}}=I-S,
\]
we obtain 
\[
B_{i}B_{\mathrm{esc}}=B_{\mathrm{esc}}B_{i}=B_{i}-B^{2}_{i}.
\]
Thus each $B_{i}$ commutes with $B_{\mathrm{esc}}$, and the collapsed
limiting POVM is commutative. 
\end{proof}

Thus the collapsed limit has a special form. The original labels have
become pairwise orthogonal positive effects, while the escape effect
commutes with each of them. In this sense the residual iteration extracts
a commutative ordered residue from the original ordered POVM. The
identity 
\[
B_{i}B_{\mathrm{esc}}=B_{i}-B^{2}_{i}
\]
also shows how the escape effect meets each surviving coordinate.
The right hand side is the unsharpness of the effect $B_{i}$. Thus
the escape coordinate carries the non-projection part of the surviving
labels.

The collapsed limit should not be viewed as an equivalent replacement
for the original POVM. It deliberately discards the noncommutative
overlap among the original labels and retains only the parts compatible
with the imposed order. The discarded mass is not lost from the POVM
normalization; it is collected in the escape effect. Thus the construction
is a sharpening or classical shadow of the ordered POVM, not a measurement
equivalence.
\begin{example}
\label{ex:3-10} Let $H=\mathbb{C}^{2}$ with standard basis $e_{1},e_{2}$,
and set 
\[
A_{1}=\begin{pmatrix}\frac{2}{5} & 0\\
0 & 0
\end{pmatrix},\qquad A_{2}=\begin{pmatrix}\frac{1}{10} & \frac{1}{10}\\
\frac{1}{10} & \frac{3}{10}
\end{pmatrix},
\]
and 
\[
A_{3}=I-A_{1}-A_{2}=\begin{pmatrix}\frac{1}{2} & -\frac{1}{10}\\
-\frac{1}{10} & \frac{7}{10}
\end{pmatrix}.
\]
Then $A_{1},A_{2},A_{3}\geq0$ and 
\[
A_{1}+A_{2}+A_{3}=I.
\]
Thus $A=\left(A_{1},A_{2},A_{3}\right)$ is an ordered POVM. The effects
do not commute, since 
\[
A_{1}A_{2}=\begin{pmatrix}\frac{1}{25} & \frac{1}{25}\\
0 & 0
\end{pmatrix},\qquad A_{2}A_{1}=\begin{pmatrix}\frac{1}{25} & 0\\
\frac{1}{25} & 0
\end{pmatrix}.
\]

We compute the collapsed limit. Since $P_{0}=I$, we have 
\[
B_{1}=A_{1}=\begin{pmatrix}\frac{2}{5} & 0\\
0 & 0
\end{pmatrix}.
\]
Also 
\[
F_{1}=\ker A_{1}=\mathbb{C}e_{2},\qquad P_{1}=\begin{pmatrix}0 & 0\\
0 & 1
\end{pmatrix}.
\]
Hence 
\[
B_{2}=P_{1}A_{2}P_{1}=\begin{pmatrix}0 & 0\\
0 & \frac{3}{10}
\end{pmatrix}.
\]
Finally, $F_{2}=\ker A_{1}\cap\ker A_{2}=\left\{ 0\right\} $, so
$B_{3}=0$. Therefore the escape effect is 
\[
B_{\mathrm{esc}}=I-B_{1}-B_{2}=\begin{pmatrix}\frac{3}{5} & 0\\
0 & \frac{7}{10}
\end{pmatrix}.
\]
The collapsed limiting POVM is 
\[
\left(\begin{pmatrix}\frac{2}{5} & 0\\
0 & 0
\end{pmatrix},\begin{pmatrix}0 & 0\\
0 & \frac{3}{10}
\end{pmatrix},0,\begin{pmatrix}\frac{3}{5} & 0\\
0 & \frac{7}{10}
\end{pmatrix}\right).
\]
It is commutative, although the starting ordered POVM was not. Moreover,
the escape effect carries the non-projection part of the surviving
coordinates: 
\[
B_{1}B_{\mathrm{esc}}=\begin{pmatrix}\frac{6}{25} & 0\\
0 & 0
\end{pmatrix}=B_{1}-B^{2}_{1},
\]
and 
\[
B_{2}B_{\mathrm{esc}}=\begin{pmatrix}0 & 0\\
0 & \frac{21}{100}
\end{pmatrix}=B_{2}-B^{2}_{2}.
\]
Thus the original labels have become orthogonal to each other, while
the escape effect still overlaps with each surviving non-projection
coordinate. 
\end{example}

\begin{rem}
\label{rem:3-11} The collapsed limiting POVM from \prettyref{rem:3-8}
should not be confused with a fixed point of $\Psi$. The iterates
of $\Psi$ do not remain in one fixed coordinate list. Each application
appends a new terminal outcome, placed after the original coordinates
in the imposed order. Thus the first iterates have the form 
\[
A^{\left(0\right)}=\left(A_{1},A_{2},A_{3},\ldots\right),
\]
\[
A^{\left(1\right)}=\left(A^{\left(1\right)}_{1},A^{\left(1\right)}_{2},A^{\left(1\right)}_{3},\ldots,E_{1}\right),
\]
and 
\[
A^{\left(2\right)}=\left(A^{\left(2\right)}_{1},A^{\left(2\right)}_{2},A^{\left(2\right)}_{3},\ldots,E^{\left(2\right)}_{1},E_{2}\right).
\]
Here $E_{1}$ is the terminal outcome created in the first application
of $\Psi$, $E^{\left(2\right)}_{1}$ is its updated version after
the second application, and $E_{2}$ is the new terminal outcome created
at the second step.

A scalar example shows the distinction. Let $H=\mathbb{C}$ and $A=\left(\frac{1}{2},\frac{1}{2}\right)$.
Then $A^{\left(m\right)}_{1}=\frac{1}{2}$ for every $m$, while 
\[
A^{\left(m\right)}_{2}=\left(1-\frac{1}{2}\right)^{m}\frac{1}{2}=2^{-m-1}\to0.
\]
Thus the collapsed limiting POVM is 
\[
B=\left(\frac{1}{2},0,\frac{1}{2}\right).
\]
But $B$ is not fixed by $\Psi$, since 
\[
\Psi\left(B\right)=\left(\frac{1}{2},0,\frac{1}{4},\frac{1}{4}\right).
\]
The original labels have stabilized, but the escape sector can still
split under another application of $\Psi$.

This does not contradict \prettyref{prop:3-1}. A fixed point of $\Psi$
must be projection-valued, after deleting zero effects. The collapsed
limiting POVM need not be projection-valued. Rather, the iteration
reaches a stable form on the original labels, while the terminal sector
carries the mass removed from those labels and may continue to evolve
under further residual transforms. 
\end{rem}

\section{Collapsed ordered POVMs}\label{sec:4}

The previous section shows that, under natural convergence hypotheses,
residual iteration leaves at the original labels only the parts that
survive all earlier tests. We now take the resulting formula as an
algebraic map on ordered POVMs.

Let $A=\left(A_{1},A_{2},\ldots\right)$ be an ordered POVM on $H$.
For each $k\ge1$, set 
\[
P_{k-1}=P_{\bigcap_{j<k}\ker A_{j}},\qquad B_{k}=P_{k-1}A_{k}P_{k-1}.
\]
Thus $B_{k}$ is the part of $A_{k}$ supported on the subspace where
all earlier effects vanish. The non-escape coordinates $\left(B_{k}\right)_{k\ge1}$
are pairwise orthogonal positive operators. Indeed, if $i<j$, then
$B_{j}H\subset\ker A_{i}$, and hence $B_{i}B_{j}=0$. Therefore their
strong sum is a positive contraction. We define 
\[
B_{\mathrm{esc}}=I-\sum_{k\ge1}B_{k}
\]
and write 
\[
\mathcal{C}\left(A\right)=\left(B_{1},B_{2},\ldots,B_{\mathrm{esc}}\right).
\]

The map $\mathcal{C}$ removes part of the ordered realization. It
keeps the visible compression of each coordinate to the kernel space
left by the earlier coordinates, and sends the remaining mass to the
escape effect. Thus two ordered POVMs may differ in their off-diagonal
couplings or in their behavior away from these kernel spaces while
having the same image under $\mathcal{C}$.

The purpose of this section is to describe this loss of data. First,
we characterize the range of $\mathcal{C}$. The image consists of
POVMs whose non-escape coordinates are mutually orthogonal and whose
support projections exhaust $H$. We call these collapsed POVMs. Then
we describe the fiber over a collapsed POVM. This gives an equivalence
relation on ordered POVMs, where two ordered realizations are equivalent
when they have the same collapsed image. For sequential quantum measurements,
this fiber description separates the statistics retained by residual
collapse from the implementation data that can vary inside the same
collapsed image.
\begin{defn}
\label{def:4-1} Let $H$ be a Hilbert space. A POVM $\left(B_{1},B_{2},\ldots,B_{\mathrm{esc}}\right)$
on $H$ is called collapsed if the non-escape coordinates satisfy 
\begin{enumerate}
\item \label{enu:4-1-a}$B_{i}B_{j}=0$ for $i\neq j$;
\item \label{enu:4-1-b}$\sum_{k\geq1}S_{k}\overset{s}{=}I$, where $S_{k}$
is the support projection of $B_{k}$. 
\end{enumerate}
\end{defn}

For a collapsed POVM, the escape coordinate is determined by the non-escape
coordinates. Since $B_{\mathrm{esc}}=I-\sum_{k\geq1}B_{k}$, we have,
for every $k\geq1$, 
\[
B_{k}B_{\mathrm{esc}}=B_{\mathrm{esc}}B_{k}=B_{k}-B^{2}_{k}.
\]
Thus the escape coordinate commutes with each non-escape coordinate.

The orthogonality condition \eqref{enu:4-1-a} is the main algebraic
constraint on the non-escape coordinates. The support condition \eqref{enu:4-1-b}
is the corresponding nondegeneracy condition. It is equivalent to
$\bigcap_{k\geq1}\ker B_{k}=\left\{ 0\right\} $. Indeed, the support
projections $S_{k}$ are pairwise orthogonal, and 
\[
\left(\sum\nolimits_{k\geq1}S_{k}H\right)^{\perp}=\bigcap\nolimits_{k\geq1}\ker B_{k}.
\]
Thus \eqref{enu:4-1-b} says that the non-escape coordinates have
no common zero vector. Without this condition, there would be a closed
subspace on which all non-escape coordinates vanish and $B_{\mathrm{esc}}$
acts as the identity. The range theorem below shows that this common
zero sector is absent for collapsed ordered POVMs.
\begin{thm}
\label{thm:4-2} Let $H$ be a Hilbert space. A POVM $\left(B_{1},B_{2},\ldots,B_{\mathrm{esc}}\right)$
lies in the range of the collapse map $\mathcal{C}$ if and only if
it is collapsed. 
\end{thm}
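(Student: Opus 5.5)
The plan is to prove the two implications separately. The forward direction uses the kernel filtration directly. The converse is an explicit construction that shifts the escape mass one slot forward.

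\emph{Range $\subset$ collapsed.} Let $A=(A_1,A_2,\ldots)$ be an ordered POVM and $\mathcal{C}(A)=(B_1,B_2,\ldots,B_{\mathrm{esc}})$. Orthogonality $B_iB_j=0$ for $i\ne j$ was already checked before \prettyref{def:4-1} (as in \prettyref{thm:3-9}). For the support condition it suffices to show $\bigcap_k\ker B_k=\{0\}$.

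Let $x$ lie in every $\ker B_k$. I will show by induction that $x\in F_{k}$ for all $k$. Trivially $x\in F_0=H$. If $x\in F_{k-1}$, then $P_{k-1}x=x$, so
\[
\langle x,A_kx\rangle=\langle x,B_kx\rangle=0,
\]
hence $A_kx=0$ and $x\in F_k$. Thus $A_kx=0$ for every $k$, and $x=\sum_kA_kx=0$ (strong sum).

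\emph{Collapsed $\subset$ range.} Let $(B_1,B_2,\ldots,B_{\mathrm{esc}})$ be collapsed, with support projections $S_k$. First, the $S_k$ are pairwise orthogonal. Second, $B_{\mathrm{esc}}$ commutes with each $B_k$, hence with each $S_k$, and $B_{\mathrm{esc}}S_k=S_k-B_k$ because $B_jS_k=0$ for $j\ne k$. So every $B_k$ and $B_{\mathrm{esc}}$ is block diagonal for $H=\bigoplus_k S_kH$. On the block $S_kH$, the pair $(B_k,\,S_k-B_k)$ acts, and $B_k$ is injective there.

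Set $S_0=0$ and define
\[
A_k=B_k+S_{k-1}B_{\mathrm{esc}}S_{k-1}=B_k+(S_{k-1}-B_{k-1}),\qquad k\ge1,
\]
that is, the escape mass of block $k-1$ is placed on label $k$. Each $A_k$ is positive and block diagonal. Using $\sum S_k\overset{s}{=}I$, one gets $\sum_kA_k\overset{s}{=}\sum_k B_k+\sum_k S_kB_{\mathrm{esc}}S_k=I$, since $B_{\mathrm{esc}}=\sum_kS_kB_{\mathrm{esc}}S_k$ strongly. So $A$ is an ordered POVM.

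Because everything is block diagonal, $\ker A_j=\bigoplus_i(\ker A_j\cap S_iH)$, and $A_j$ vanishes on $S_iH$ unless $i\in\{j-1,j\}$. Hence $F_{k-1}=\bigcap_{j<k}\ker A_j$ contains $\bigoplus_{i\ge k}S_iH$. Conversely, take a block $i<k$. If $i\le k-2$, then on $S_iH$ one has $A_i+A_{i+1}=S_i$, which is injective there. If $i=k-1$, then $A_{k-1}=B_{k-1}$ on $S_{k-1}H$, which is injective there. So no block with $i<k$ contributes, and
\[
F_{k-1}=\bigoplus_{i\ge k}S_iH.
\]
Therefore $P_{k-1}A_kP_{k-1}$ keeps $B_k$ and kills the $S_{k-1}$-term, giving $P_{k-1}A_kP_{k-1}=B_k$. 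The escape coordinate then matches automatically, since both equal $I-\sum_kB_k$. Thus $\mathcal{C}(A)=(B_1,B_2,\ldots,B_{\mathrm{esc}})$.

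The main obstacle is the kernel computation $F_{k-1}=\bigoplus_{i\ge k}S_iH$. It rests on two points: that $B_{\mathrm{esc}}$ reduces each $S_kH$, which uses the commutation from \prettyref{def:4-1} and functional calculus for support projections, and that blockwise kernels of block-diagonal operators assemble into a closed direct sum. I expect this to go through once block diagonality is established; the strong-sum bookkeeping should be routine.
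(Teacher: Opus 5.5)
Your proposal is correct and follows the paper's proof: the forward direction is the same induction through the kernel filtration $F_k$, and the converse uses exactly the paper's realization $A_k=B_k+(S_{k-1}-B_{k-1})$. The differences are cosmetic: you verify normalization and the identity $\bigcap_{j<k}\ker A_j=\bigoplus_{i\ge k}S_iH$ blockwise via $H=\bigoplus_k S_kH$, whereas the paper telescopes the partial sums to $\sum_{k<N}S_k+B_N$ and computes the kernels by a direct induction on $E_{k-1}=(\sum_{j<k}S_jH)^\perp$, which does not need block diagonality of $B_{\mathrm{esc}}$.
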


\begin{proof}
Let $A=\left(A_{1},A_{2},\ldots\right)$ be an ordered POVM, and let
$\mathcal{C}\left(A\right)=\left(B_{1},B_{2},\ldots,B_{\mathrm{esc}}\right)$.
By \prettyref{thm:3-9}, the non-escape coordinates are pairwise orthogonal.

It remains to check the support condition. Suppose $x\in\bigcap_{k\geq1}\ker B_{k}$.
Set 
\[
F_{k-1}=\bigcap_{j<k}\ker A_{j},\qquad P_{k-1}=P_{F_{k-1}}.
\]
We prove by induction that $x\in F_{k}$ for every $k\geq1$. For
$k=1$, we have $B_{1}=A_{1}$, so $B_{1}x=0$ gives $A_{1}x=0$.
Assume $x\in F_{k-1}$. Then $P_{k-1}x=x$, and 
\[
0=\left\langle x,B_{k}x\right\rangle =\left\langle x,A_{k}x\right\rangle .
\]
Since $A_{k}\geq0$, it follows that $A_{k}x=0$. Hence $x\in F_{k}$.
Thus $A_{k}x=0$ for every $k\geq1$. Since $\sum_{k\geq1}A_{k}\overset{s}{=}I$,
we get 
\[
\left\Vert x\right\Vert ^{2}=\sum_{k\geq1}\left\langle x,A_{k}x\right\rangle =0.
\]
Therefore $x=0$. Hence $\bigcap_{k\geq1}\ker B_{k}=\left\{ 0\right\} $.
Since the $B_{k}$'s are pairwise orthogonal, this is equivalent to
$\sum_{k\geq1}S_{k}\overset{s}{=}I$. Therefore $\mathcal{C}\left(A\right)$
is collapsed.

Conversely, let 
\[
B=\left(B_{1},B_{2},\ldots,B_{\mathrm{esc}}\right)
\]
be collapsed, and let $S_{k}$ be the support projection of $B_{k}$.
Define 
\[
A_{1}=B_{1},\qquad A_{k}=B_{k}+\left(S_{k-1}-B_{k-1}\right)\quad\left(k\geq2\right).
\]
Each $A_{k}$ is positive. For $N\geq2$, 
\[
\sum^{N}_{k=1}A_{k}=B_{1}+\sum^{N}_{k=2}\left(B_{k}+S_{k-1}-B_{k-1}\right)=\sum^{N-1}_{k=1}S_{k}+B_{N}.
\]
Since $0\leq B_{N}\leq S_{N}$ and the projections $S_{k}$ are pairwise
orthogonal, $B_{N}\xrightarrow{s}0$. Hence 
\[
\sum_{k\geq1}A_{k}\overset{s}{=}\sum_{k\geq1}S_{k}=I.
\]
Thus $A=\left(A_{1},A_{2},\ldots\right)$ is an ordered POVM.

We now compute its collapse. Set 
\[
E_{k-1}=\left(\sum_{j<k}S_{j}H\right)^{\perp}.
\]
We claim that 
\[
\bigcap_{j<k}\ker A_{j}=E_{k-1}
\]
for every $k\geq1$. The claim is clear for $k=1$. Assume it holds
at level $k$. If $x\in E_{k-1}$, then 
\[
\left(S_{k-1}-B_{k-1}\right)x=0,
\]
and hence 
\[
A_{k}x=B_{k}x.
\]
Since the support projection of $B_{k}$ is $S_{k}$, we have $\ker B_{k}=S^{\perp}_{k}H$.
Therefore 
\[
E_{k-1}\cap\ker A_{k}=E_{k-1}\cap\ker B_{k}=E_{k}.
\]
Using the induction hypothesis, this gives 
\[
\bigcap_{j\leq k}\ker A_{j}=E_{k}.
\]
The claim follows by induction.

Let $P_{k-1}$ denote the projection onto $\bigcap_{j<k}\ker A_{j}$.
Since this space is $E_{k-1}$, and since $B_{k}$ is supported on
$S_{k}H\subset E_{k-1}$, we have 
\[
P_{k-1}A_{k}P_{k-1}=B_{k}
\]
for every $k\geq1$. Thus the collapsed non-escape coordinates of
$A$ are $B_{1},B_{2},\ldots$. Since $B$ is a POVM, the escape coordinate
is 
\[
I-\sum_{k\geq1}B_{k}=B_{\mathrm{esc}}.
\]
Therefore $\mathcal{C}\left(A\right)=B$. 
\end{proof}

We next describe the fiber over a collapsed POVM. Let $B=\left(B_{1},B_{2},\ldots,B_{\mathrm{esc}}\right)$
be collapsed, and let $S_{k}$ be the support projection of $B_{k}$.
For $k\geq1$, set 
\[
E_{k-1}=\ker\left(\sum_{j<k}S_{j}\right).
\]
Thus $E_{0}=H$, and $\left(E_{k}\right)$ is a decreasing sequence
of closed subspaces.
\begin{thm}
\label{thm:4-3} Let $B=\left(B_{1},B_{2},\ldots,B_{\mathrm{esc}}\right)$
be collapsed. An ordered POVM $A=\left(A_{1},A_{2},\ldots\right)$
satisfies $\mathcal{C}\left(A\right)=B$ if and only if, for every
$k\geq1$, 
\[
\bigcap_{j<k}\ker A_{j}=E_{k-1}
\]
and 
\[
P_{E_{k-1}}A_{k}P_{E_{k-1}}=B_{k}.
\]
\end{thm}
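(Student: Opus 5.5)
The plan is to prove the two directions separately. The only link between $A$ and $B$ is the kernel filtration $F_{k-1}=\bigcap_{j<k}\ker A_j$, with projections $P_{k-1}=P_{F_{k-1}}$, so both directions reduce to comparing $F_{k-1}$ with $E_{k-1}$.

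\emph{Sufficiency.} Assume that $F_{k-1}=E_{k-1}$ and $P_{E_{k-1}}A_kP_{E_{k-1}}=B_k$ for every $k$. By definition, the non-escape coordinates of $\mathcal{C}(A)$ are $P_{k-1}A_kP_{k-1}=P_{E_{k-1}}A_kP_{E_{k-1}}=B_k$. The escape coordinate of $\mathcal{C}(A)$ is $I-\sum_k B_k$. Since $B$ is a POVM, this equals $B_{\mathrm{esc}}$. Hence $\mathcal{C}(A)=B$.

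\emph{Necessity.} Assume $\mathcal{C}(A)=B$, so that $P_{k-1}A_kP_{k-1}=B_k$ for all $k$. The second condition follows at once once we show $F_{k-1}=E_{k-1}$. We prove this by induction on $k$; the case $k=1$ is trivial, since $F_0=E_0=H$. Suppose $F_{k-1}=E_{k-1}$. For $x\in F_{k-1}$ we have $P_{k-1}x=x$, so
\[
\langle x,A_kx\rangle=\langle x,B_kx\rangle .
\]
Because both operators are positive, $A_kx=0$ if and only if $B_kx=0$, that is, if and only if $S_kx=0$. Therefore
\[
F_k=F_{k-1}\cap\ker A_k=E_{k-1}\cap\ker S_k .
\]
Since the $S_j$ are pairwise orthogonal projections, the kernel of $\sum_{j\le k}S_j$ is $\bigcap_{j\le k}\ker S_j$. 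It follows that $E_{k-1}\cap\ker S_k=E_k$, which completes the induction.

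The only slightly delicate point is the identity $\ker\bigl(\sum_{j<k}S_j\bigr)=\bigcap_{j<k}\ker S_j$ used to identify $E_k$. It holds because each $S_j$ is positive and the sum is finite. The pairwise orthogonality of the $S_j$ follows from $B_iB_j=0$. The remainder of the argument repeats the induction already used in the converse half of Theorem~\ref{thm:4-2}, now applied to an arbitrary $A$ in the fiber rather than to the specific realization constructed there.
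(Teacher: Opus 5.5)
Your proof is correct and follows the paper's argument essentially step for step: sufficiency is read off from the definition of $\mathcal{C}$ together with $B_{\mathrm{esc}}=I-\sum_k B_k$, and necessity is the same induction on $k$ showing $F_k=E_{k-1}\cap\ker B_k=E_k$ from $\langle x,A_kx\rangle=\langle x,B_kx\rangle$ for $x\in E_{k-1}$. One cosmetic point: the identity $\ker\bigl(\sum_{j\le k}S_j\bigr)=\bigcap_{j\le k}\ker S_j$ needs only positivity and finiteness of the sum, as you say at the end, so the appeal to pairwise orthogonality there is superfluous.
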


\begin{proof}
Suppose first that $\mathcal{C}\left(A\right)=B$. Set 
\[
F_{k-1}=\bigcap_{j<k}\ker A_{j}.
\]
By definition of the collapse map, 
\[
P_{F_{k-1}}A_{k}P_{F_{k-1}}=B_{k}.
\]
We prove that $F_{k-1}=E_{k-1}$ for every $k\geq1$.

For $k=1$, both spaces are $H$. Assume that $F_{k-1}=E_{k-1}$.
Then 
\[
B_{k}=P_{E_{k-1}}A_{k}P_{E_{k-1}}.
\]
If $x\in E_{k-1}$, then 
\[
\left\langle x,B_{k}x\right\rangle =\left\langle x,A_{k}x\right\rangle .
\]
Since $A_{k}\geq0$ and $B_{k}\geq0$, it follows that, for $x\in E_{k-1}$,
\[
x\in\ker B_{k}\Longleftrightarrow x\in\ker A_{k}.
\]
Hence 
\[
F_{k}=F_{k-1}\cap\ker A_{k}=E_{k-1}\cap\ker B_{k}.
\]
Since $S_{k}$ is the support projection of $B_{k}$, we have $\ker B_{k}=S^{\perp}_{k}H$.
Also, the support projections $\left(S_{k}\right)$ are pairwise orthogonal,
because the non-escape coordinates of $B$ are pairwise orthogonal.
Therefore $S_{k}H\subset E_{k-1}$, and 
\[
E_{k-1}\cap\ker B_{k}=E_{k-1}\cap S^{\perp}_{k}H=E_{k}.
\]
Thus $F_{k}=E_{k}$. The induction proves 
\[
F_{k-1}=E_{k-1}
\]
for every $k\geq1$. The two asserted conditions follow.

Conversely, suppose that the two conditions hold. Then the projection
used by the collapse map before the $k$-th coordinate is $P_{E_{k-1}}$.
Thus the $k$-th collapsed non-escape coordinate is 
\[
P_{E_{k-1}}A_{k}P_{E_{k-1}}=B_{k}.
\]
Since $B$ is a POVM, the escape coordinate is 
\[
I-\sum_{k\geq1}B_{k}=B_{\mathrm{esc}}.
\]
Therefore $\mathcal{C}\left(A\right)=B$. 
\end{proof}

The theorem separates the data fixed by the collapse from the data
lost by it. Once the collapsed POVM $B=\left(B_{1},B_{2},\ldots,B_{\mathrm{esc}}\right)$
is fixed, its support projections determine the decreasing subspaces
\[
H=E_{0}\supset E_{1}\supset E_{2}\supset\cdots.
\]
Every ordered POVM in the fiber over $B$ has this same kernel filtration.
Moreover, on the residual space $E_{k-1}$, its $k$-th coordinate
has the fixed compression $B_{k}$.

Thus the freedom in the fiber of $\mathcal{C}$ does not lie in the
ordered zero spaces or in the visible residual compressions. It lies
in how each $A_{k}$ is completed away from $E_{k-1}$, subject to
positivity and the normalization $\sum_{k\geq1}A_{k}\overset{s}{=}I$. 

Equivalently, the collapse map defines an equivalence relation on
ordered POVMs on $H$ by 
\[
A\sim_{\mathcal{C}}A'\qquad\text{if and only if}\qquad\mathcal{C}\left(A\right)=\mathcal{C}\left(A'\right),
\]
and Theorems \ref{thm:4-2} and \ref{thm:4-3} identify the equivalence
classes with the fibers over collapsed POVMs.
\begin{example}
Let $H=\mathbb{C}^{2}$, and let $P_{1},P_{2}$ be the coordinate
projections. Set 
\[
t=\frac{1}{2},\qquad s=\frac{1}{4},\qquad\alpha=\frac{1}{4}.
\]
Define 
\[
A=\left(A_{1},A_{2},A_{3},0,\ldots\right)
\]
by 
\[
A_{1}=\frac{1}{2}P_{1},\qquad A_{2}=\frac{1}{2}P_{2},\qquad A_{3}=\frac{1}{2}I.
\]
Define another ordered POVM 
\[
A'=\left(A'_{1},A'_{2},A'_{3},0,\ldots\right)
\]
by 
\[
A'_{1}=\frac{1}{2}P_{1},
\]
\[
A'_{2}=\begin{pmatrix}\frac{1}{4} & \frac{1}{4}\\
\frac{1}{4} & \frac{1}{2}
\end{pmatrix},
\]
and 
\[
A'_{3}=I-A'_{1}-A'_{2}=\begin{pmatrix}\frac{1}{4} & -\frac{1}{4}\\
-\frac{1}{4} & \frac{1}{2}
\end{pmatrix}.
\]
Both $A'_{2}$ and $A'_{3}$ are positive, since their determinants
are 
\[
\frac{1}{4}\cdot\frac{1}{2}-\frac{1}{16}=\frac{1}{16}.
\]
Thus $A'$ is an ordered POVM. The POVMs $A$ and $A'$ are not equal,
since $A'_{2}$ has a nonzero off-diagonal entry.

We now compute their collapsed images. For both POVMs, 
\[
B_{1}=A_{1}=A'_{1}=\frac{1}{2}P_{1}.
\]
The first residual kernel is $P_{2}H$. Hence 
\[
P_{2}A_{2}P_{2}=\frac{1}{2}P_{2},\qquad P_{2}A'_{2}P_{2}=\frac{1}{2}P_{2}.
\]
After the second collapsed coordinate, the common residual kernel
is zero. Therefore all later non-escape coordinates vanish under collapse.
Thus 
\[
\mathcal{C}\left(A\right)=\mathcal{C}\left(A'\right)=\left(\frac{1}{2}P_{1},\frac{1}{2}P_{2},0,\ldots,\frac{1}{2}I\right),
\]
although $A\neq A'$. 

Operationally, this example shows how the collapse map can lose coherent
off-diagonal data. The ordered POVM $A$ is diagonal in the basis
determined by $P_{1}$ and $P_{2}$. The ordered POVM $A'$, however,
contains off-diagonal entries in its second and third effects, coupling
the first support sector to the second. The collapse map does not
retain this coupling. At the second step, it sees only the compression
of the second effect to the residual space $P_{2}H$, and this compression
is the same for $A$ and $A'$. Thus two ordered measurements with
different internal coherences can have the same collapsed image. In
this sense, the fiber of $\mathcal{C}$ contains coherent data that
is lost when one passes from a sequential quantum realization to its
collapsed POVM.
\end{example}

The preceding example can be made into a general local construction.
It shows that a fiber of the collapse map may contain ordered realizations
with off-diagonal coupling data. The construction couples two adjacent
support sectors while keeping the collapsed image unchanged.
\begin{prop}
\label{prop:4-5} Let 
\[
B=\left(B_{1},B_{2},\ldots,B_{\mathrm{esc}}\right)
\]
be a collapsed POVM, and let $S_{k}$ be the support projection of
$B_{k}$. Set 
\[
L_{k}=S_{k}-B_{k}.
\]
Suppose that there are a positive operator $C$ on $S_{1}H$ and a
nonzero operator $X:S_{1}H\to S_{2}H$ such that 
\[
0\leq C\leq L_{1}
\]
and the two block operators 
\[
\begin{pmatrix}C & X^{*}\\
X & B_{2}
\end{pmatrix},\qquad\begin{pmatrix}L_{1}-C & -X^{*}\\
-X & L_{2}
\end{pmatrix}
\]
are positive on $S_{1}H\oplus S_{2}H$. Then there is an ordered POVM
$A=\left(A_{1},A_{2},\ldots\right)$ such that 
\[
\mathcal{C}\left(A\right)=B
\]
and such that $A_{2}$ has a nonzero off-diagonal part between $S_{1}H$
and $S_{2}H$. 
\end{prop}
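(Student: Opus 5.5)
The plan is to perturb the canonical preimage built in the proof of \prettyref{thm:4-2} in its second and third coordinates only. Then verify the two conditions of \prettyref{thm:4-3}.

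\emph{Construction.} Recall that the canonical preimage is $A_{1}=B_{1}$ and $A_{k}=B_{k}+L_{k-1}$ for $k\ge2$. Regard block operators on $S_{1}H\oplus S_{2}H$ as operators on $H$, extended by zero on $(S_{1}+S_{2})^{\perp}H$. Define
\[
A_{1}=B_{1},\qquad A_{2}=\begin{pmatrix}C & X^{*}\\
X & B_{2}
\end{pmatrix},\qquad A_{3}=B_{3}+\begin{pmatrix}L_{1}-C & -X^{*}\\
-X & L_{2}
\end{pmatrix},
\]
and $A_{k}=B_{k}+L_{k-1}$ for $k\ge4$.

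\emph{Positivity and normalization.} $A_{2}\ge0$ holds by hypothesis. Since $B_{3}$ lives on $S_{3}H$, which is orthogonal to $S_{1}H\oplus S_{2}H$, $A_{3}$ is an orthogonal sum of two positive operators, hence positive. The off-diagonal terms cancel, so
\[
A_{2}+A_{3}=B_{2}+L_{1}+B_{3}+L_{2}.
\]
This is exactly the sum of the second and third canonical coordinates. Hence all partial sums $\sum_{k\le N}A_{k}$ with $N\ge3$ coincide with the canonical ones, and the computation in \prettyref{thm:4-2} gives $\sum_{k}A_{k}\overset{s}{=}\sum_{k}S_{k}=I$. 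Thus $A$ is an ordered POVM. Its second effect has off-diagonal part $X\neq0$ between $S_{1}H$ and $S_{2}H$.

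\emph{Collapse.} I would check inductively the two conditions of \prettyref{thm:4-3}, namely $\bigcap_{j<k}\ker A_{j}=E_{k-1}$ and $P_{E_{k-1}}A_{k}P_{E_{k-1}}=B_{k}$.

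For $k=1$ this is trivial, and $\ker A_{1}=\ker B_{1}=E_{1}$.

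For $k=2$, take $x\in E_{1}$, so $x$ has no $S_{1}$-component. Then $\langle x,A_{2}x\rangle=\langle x,B_{2}x\rangle$, so $P_{E_{1}}A_{2}P_{E_{1}}=B_{2}$. This uses $S_{2}H\subset E_{1}$, which follows from the orthogonality of the supports. The positivity argument of \prettyref{thm:4-3} then applies: since $A_{2}\ge0$, we have $A_{2}x=0$ iff $\langle x,A_{2}x\rangle=0$ iff $B_{2}x=0$. Hence $E_{1}\cap\ker A_{2}=E_{1}\cap\ker B_{2}=E_{2}$.

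For $k\ge3$, the space $E_{k-1}$ is orthogonal to $S_{1}H$, $S_{2}H$ and to $S_{k-1}H$. So both the perturbation block and $L_{k-1}$ vanish after compression to $E_{k-1}$, leaving $B_{k}$. The same positivity argument gives $E_{k-1}\cap\ker A_{k}=E_{k}$. By \prettyref{thm:4-3}, $\mathcal{C}(A)=B$.

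\emph{Main obstacle.} The only delicate point is the kernel-filtration step at $k=2$. Because $A_{2}$ is no longer supported inside $E_{1}$, it is not immediately clear that $\ker A_{2}$ meets $E_{1}$ exactly in $E_{2}$. I expect to settle this with the quadratic-form argument above: positivity of $A_{2}$ upgrades the vanishing of $\langle x,A_{2}x\rangle$ to $A_{2}x=0$. This is precisely where the positivity of the first block hypothesis is essential. The positivity of the second block is used only to make $A_{3}\ge0$.
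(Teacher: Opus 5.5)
Your proposal is correct and is essentially the paper's proof: the same perturbation of the canonical preimage in coordinates $2$ and $3$, the same normalization via cancellation of $\pm X$, and the same verification of the two conditions of \prettyref{thm:4-3}. The one small difference is at $k=2$. The paper reads off $E_{1}\cap\ker A_{2}=E_{2}$ directly from the orthogonal splitting $A_{2}x=X^{*}x_{2}+B_{2}x_{2}$, so positivity of $A_{2}$ is not actually needed for that step, despite your remark that it is essential there. You instead use the quadratic-form argument from \prettyref{thm:4-3}, which is equally valid.
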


\begin{proof}
Define 
\[
A_{1}=B_{1}.
\]
On $S_{1}H\oplus S_{2}H$, set 
\[
A_{2}=\begin{pmatrix}C & X^{*}\\
X & B_{2}
\end{pmatrix},
\]
and let $A_{2}$ vanish on $\left(S_{1}H\oplus S_{2}H\right)^{\perp}$.
Next define 
\[
A_{3}=B_{3}+\begin{pmatrix}L_{1}-C & -X^{*}\\
-X & L_{2}
\end{pmatrix},
\]
where the block operator acts on $S_{1}H\oplus S_{2}H$ and $B_{3}$
acts on $S_{3}H$. For $k\geq4$, set 
\[
A_{k}=B_{k}+L_{k-1}.
\]
The hypotheses give positivity of $A_{2}$ and $A_{3}$. For $k\geq4$,
positivity follows from $0\leq B_{k}\leq S_{k}$ and $0\leq L_{k-1}\leq S_{k-1}$.

We check normalization. The first three coordinates satisfy 
\[
A_{1}+A_{2}+A_{3}=S_{1}+S_{2}+B_{3}.
\]
For $N\geq4$, 
\[
\sum^{N}_{k=1}A_{k}=S_{1}+S_{2}+B_{3}+\sum^{N}_{k=4}\left(B_{k}+L_{k-1}\right)=\sum^{N-1}_{k=1}S_{k}+B_{N}.
\]
Since $0\leq B_{N}\leq S_{N}$ and $\sum_{k\geq1}S_{k}\overset{s}{=}I$,
we have $B_{N}\xrightarrow{s}0$. Hence 
\[
\sum_{k\geq1}A_{k}\overset{s}{=}I.
\]
Thus $A$ is an ordered POVM.

It remains to compute the collapse. Set 
\[
E_{k-1}=\ker\left(\sum_{j<k}S_{j}\right).
\]
We verify the two conditions in \prettyref{thm:4-3}. For $k=1$,
we have $A_{1}=B_{1}$, and 
\[
\ker A_{1}=\ker B_{1}=E_{1}.
\]

For $k=2$, the compression of $A_{2}$ to $E_{1}=S^{\perp}_{1}H$
is $B_{2}$. Moreover, if $x\in E_{1}$, write $x=x_{2}+x_{\perp}$
with $x_{2}\in S_{2}H$ and $x_{\perp}\in E_{2}$. Then 
\[
A_{2}x=X^{*}x_{2}+B_{2}x_{2}.
\]
The two terms lie in orthogonal subspaces. Since $S_{2}$ is the support
projection of $B_{2}$, $B_{2}x_{2}=0$ implies $x_{2}=0$. Hence
\[
E_{1}\cap\ker A_{2}=E_{2}.
\]

For $k=3$, the block part of $A_{3}$ is supported on $S_{1}H\oplus S_{2}H$,
so it vanishes on $E_{2}$. Therefore 
\[
P_{E_{2}}A_{3}P_{E_{2}}=B_{3}
\]
and 
\[
E_{2}\cap\ker A_{3}=E_{3}.
\]
For $k\geq4$, the term $L_{k-1}$ is supported on $S_{k-1}H$, hence
vanishes on $E_{k-1}$. Thus 
\[
P_{E_{k-1}}A_{k}P_{E_{k-1}}=B_{k}
\]
and 
\[
E_{k-1}\cap\ker A_{k}=E_{k}.
\]
It follows by induction that 
\[
\bigcap_{j<k}\ker A_{j}=E_{k-1}
\]
for every $k\geq1$. By \prettyref{thm:4-3}, 
\[
\mathcal{C}\left(A\right)=B.
\]
Since $X\neq0$, the second coordinate $A_{2}$ has nonzero off-diagonal
part between $S_{1}H$ and $S_{2}H$. 
\end{proof}

\begin{rem}
The hypotheses say that the coupling $X$ is small enough to keep
both $A_{2}$ and $A_{3}$ positive. The two signs of $X$ make the
coupling cancel in the total sum, while the collapse map loses it
because the collapsed coordinates are obtained by compression to the
current residual space.
\end{rem}

\section{Iteration after collapse}\label{sec:5}

We next describe further residual iteration of a collapsed POVM. The
non-escape coordinates stay fixed. The remaining action takes place
in the escape coordinate and is governed by a universal scalar recursion.

For this statement we use the residual recursion for ordered POVMs
of order type $\omega+r$, where a countable ordered block is followed
by finitely many terminal coordinates. After $B_{1},B_{2},\ldots$
have been processed, the next residual test is applied to the first
terminal coordinate. This is the convention implicit in iterating
the residual transform after terminal outcomes have been appended.
\begin{prop}
\label{prop:5-1} Let $B=\left(B_{1},B_{2},\ldots,B_{\mathrm{esc}}\right)$
be a collapsed POVM on $H$, and write $E=B_{\mathrm{esc}}$. Define
scalar polynomials $p_{m,j}$ on $\left[0,1\right]$, for $m\geq0$
and $1\leq j\leq m+1$, by 
\[
p_{0,1}\left(t\right)=t.
\]
Given $p_{m,1},\ldots,p_{m,m+1}$, set 
\[
p_{m+1,j}\left(t\right)=t\prod_{\ell<j}\left(1-p_{m,\ell}\left(t\right)\right)p_{m,j}\left(t\right),\qquad1\leq j\leq m+1,
\]
and 
\[
p_{m+1,m+2}\left(t\right)=t\prod^{m+1}_{\ell=1}\left(1-p_{m,\ell}\left(t\right)\right).
\]
Then, for every $m\geq0$, 
\[
\Psi^{m}\left(B\right)=\left(B_{1},B_{2},\ldots,p_{m,1}\left(E\right),\ldots,p_{m,m+1}\left(E\right)\right),
\]
where the polynomial coordinates are placed after the countable block
$B_{1},B_{2},\ldots$ in the order shown.

For every $t\in\left[0,1\right]$, 
\[
\sum^{m+1}_{j=1}p_{m,j}\left(t\right)=t.
\]
Hence each $p_{m,j}$ is nonnegative on $\left[0,1\right]$, and $p_{m,1}\left(E\right),\ldots,p_{m,m+1}\left(E\right)$
form a positive decomposition of $E$.

The first two families are 
\[
p_{m,1}\left(t\right)=t^{m+1}
\]
and, for $m\geq1$, 
\[
p_{m,2}\left(t\right)=t^{m}\prod^{m}_{\ell=1}\left(1-t^{\ell}\right).
\]
\end{prop}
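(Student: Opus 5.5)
The plan is to compute one application of $\Psi$ to a POVM of the form $\left(B_{1},B_{2},\ldots,f_{1}\left(E\right),\ldots,f_{r}\left(E\right)\right)$, where the $f_{j}$ are continuous functions on $\left[0,1\right]$. The result should be of the same form, with the countable block unchanged. The recursion for the $p_{m,j}$ then comes out of the scalar functional calculus of $E$, and induction on $m$ finishes the argument; the base case $m=0$ is just $\Psi^{0}\left(B\right)=B$ with $p_{0,1}\left(t\right)=t$.

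\emph{Step 1: the countable block is fixed.} Let $S_{k}$ be the support projection of $B_{k}$ and $E_{k-1}=\ker\left(\sum_{j<k}S_{j}\right)$. I will show by induction on $k$ that the residual recursion \prettyref{eq:2-1} gives $T_{k}=B_{k}$ and $R_{k}=I-\sum_{j\le k}B_{j}$. Suppose $R_{k-1}=I-\sum_{j<k}B_{j}$. By pairwise orthogonality (\prettyref{def:4-1}), $R_{k-1}$ acts as the identity on $E_{k-1}\supset S_{k}H$, and it commutes with $B_{k}$. Hence $R_{k-1}^{1/2}B_{k}R_{k-1}^{1/2}=B_{k}$, using that $B_{k}=S_{k}B_{k}S_{k}$ and $R_{k-1}^{1/2}S_{k}=S_{k}$. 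It follows that $R_{k}=R_{k-1}-B_{k}$, which closes the induction. The strong limit of the residuals after the countable block is then $I-\sum_{k}B_{k}=E$. This is the only place where the infinite block matters, and I expect it to be the main point to check carefully: under the $\omega+r$ convention, the residual handed to the first terminal coordinate is exactly this strong limit $E$.

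\emph{Step 2: the terminal coordinates.} After the block, every terminal coordinate $f_{j}\left(E\right)$ and every residual is a continuous function of $E$, so all operators involved lie in the abelian $C^{*}$-algebra generated by $E$. Square roots and products therefore reduce to pointwise scalar operations. Put $r_{1}\left(t\right)=t$. If $r_{j}$ is the residual before the $j$-th terminal coordinate, the recursion produces the output $r_{j}f_{j}$ and the next residual $r_{j+1}=r_{j}\left(1-f_{j}\right)$, so $r_{j}\left(t\right)=t\prod_{\ell<j}\left(1-f_{\ell}\left(t\right)\right)$. The new appended terminal outcome is $r_{r+1}$, a finite product and hence already a limit. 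With $f_{j}=p_{m,j}$ and $r=m+1$, this gives exactly the stated formulas for $p_{m+1,j}$ and $p_{m+1,m+2}$.

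\emph{Step 3: the scalar identities.} I will prove $\sum_{j}p_{m,j}=t$ and $0\le p_{m,j}\le 1$ on $\left[0,1\right]$ simultaneously by induction on $m$. If $0\le p_{m,\ell}\le1$, then every factor $1-p_{m,\ell}$ is nonnegative, so each $p_{m+1,j}\ge0$. The sum telescopes:
\[
\sum_{j}r_{j}p_{m,j}+r_{m+2}=\sum_{j}\left(r_{j}-r_{j+1}\right)+r_{m+2}=r_{1}=t.
\]
Since the terms are nonnegative and sum to $t\le1$, each satisfies $p_{m+1,j}\le t\le1$, which completes the induction. The operators $p_{m,j}\left(E\right)$ are then positive and sum to $E$.

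\emph{Step 4: the explicit families.} For $j=1$ the recursion reads $p_{m+1,1}=t\cdot p_{m,1}$, which gives $p_{m,1}=t^{m+1}$. For $j=2$ one has $p_{1,2}=t\left(1-t\right)$ and $p_{m+1,2}=t\left(1-t^{m+1}\right)p_{m,2}$. Induction then gives $p_{m,2}=t^{m}\prod_{\ell=1}^{m}\left(1-t^{\ell}\right)$.
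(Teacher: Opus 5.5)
Your proposal is correct and follows the paper's proof essentially step by step. You show the countable block is fixed with residual $E$, compute the terminal residuals $t\prod_{\ell<j}\left(1-p_{m,\ell}(t)\right)$ by functional calculus in $E$, telescope to get the sum identity together with nonnegativity, and read off $p_{m,1}$ and $p_{m,2}$ from the recursion; your Step 1 also spells out, via $R_{k-1}^{1/2}S_{k}=S_{k}$, the invariance of $B_{1},B_{2},\ldots$ that the paper only asserts.
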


\begin{proof}
Since $B$ is collapsed, the non-escape coordinates are pairwise orthogonal,
and the residual recursion leaves $B_{1},B_{2},\ldots$ fixed. After
the countable block has been processed, the remaining residual is
\[
E=I-\sum_{i\geq1}B_{i}.
\]
Thus only the finite terminal part has to be computed.

For $m=0$, the terminal part is $E=p_{0,1}\left(E\right)$. Suppose
that after $m$ iterations the terminal coordinates are 
\[
p_{m,1}\left(E\right),\ldots,p_{m,m+1}\left(E\right).
\]
They are polynomials in $E$, hence commute. During the next application
of $\Psi$, the residual before the $j$-th terminal coordinate is
\[
E\prod_{\ell<j}\left(I-p_{m,\ell}\left(E\right)\right).
\]
The next $j$-th terminal output is 
\[
E\prod_{\ell<j}\left(I-p_{m,\ell}\left(E\right)\right)p_{m,j}\left(E\right)=p_{m+1,j}\left(E\right),
\]
and the new terminal residual is 
\[
E\prod^{m+1}_{\ell=1}\left(I-p_{m,\ell}\left(E\right)\right)=p_{m+1,m+2}\left(E\right).
\]
This proves the formula for $\Psi^{m}\left(B\right)$ by induction.

We prove the summation identity and positivity at the same time. The
case $m=0$ is clear. Assume the assertion holds at level $m$. For
$t\in\left[0,1\right]$, set 
\[
r_{0}\left(t\right)=t,\qquad r_{j}\left(t\right)=t\prod^{j}_{\ell=1}\left(1-p_{m,\ell}\left(t\right)\right),\qquad1\leq j\leq m+1.
\]
Since $0\leq p_{m,\ell}\left(t\right)\leq t\leq1$, all factors in
$r_{j}$ are nonnegative. Also, 
\[
p_{m+1,j}\left(t\right)=r_{j-1}\left(t\right)-r_{j}\left(t\right),\qquad1\leq j\leq m+1,
\]
and $p_{m+1,m+2}\left(t\right)=r_{m+1}\left(t\right)$. Summing gives
\[
\sum^{m+2}_{j=1}p_{m+1,j}\left(t\right)=t,
\]
and all terms are nonnegative. This completes the induction.

Finally, $p_{m+1,1}\left(t\right)=tp_{m,1}\left(t\right)$ and $p_{0,1}\left(t\right)=t$,
so $p_{m,1}\left(t\right)=t^{m+1}$. For the second coordinate, 
\[
p_{m+1,2}\left(t\right)=t\left(1-p_{m,1}\left(t\right)\right)p_{m,2}\left(t\right)=t\left(1-t^{m+1}\right)p_{m,2}\left(t\right).
\]
Since $p_{1,2}\left(t\right)=t\left(1-t\right)$, we get 
\[
p_{m,2}\left(t\right)=t^{m}\prod^{m}_{\ell=1}\left(1-t^{\ell}\right)
\]
for $m\geq1$. 
\end{proof}

\begin{rem}
The polynomials in \prettyref{prop:5-1} do not form an orthogonal
polynomial system with respect to any positive measure having mass
in $\left(0,1\right)$. For fixed $m$, they form a positive allocation
of the scalar mass $t$: 
\[
p_{m,j}\left(t\right)\geq0,\qquad\sum^{m+1}_{j=1}p_{m,j}\left(t\right)=t.
\]
For $0<t<1$, the recursion gives $p_{m,j}\left(t\right)>0$ for every
$1\leq j\leq m+1$. Hence two distinct coordinates at the same level
cannot be orthogonal for such a measure.

The useful structure is residual allocation. The first coordinate
is $p_{m,1}\left(t\right)=t^{m+1}$, while the second has the finite
product form 
\[
p_{m,2}\left(t\right)=t^{m}\prod^{m}_{\ell=1}\left(1-t^{\ell}\right).
\]
Further iteration introduces no new operator data. It repeatedly decomposes
the escape effect by functional calculus in $E$. 
\end{rem}

\begin{prop}
\label{prop:5-3} Let $B=\left(B_{1},B_{2},\ldots,B_{\mathrm{esc}}\right)$
be a collapsed POVM on $H$, and write $E=B_{\mathrm{esc}}$. Let
$p_{m,j}$ be the polynomials from \prettyref{prop:5-1}. Then, for
each fixed $j\geq1$, 
\[
p_{m,j}\left(E\right)\xrightarrow{s}0
\]
as $m\to\infty$. If $H$ is finite-dimensional, then the convergence
is in operator norm.

Consequently, the iterates $\Psi^{m}\left(B\right)$ have the coordinatewise
strong limit 
\[
\left(B_{1},B_{2},\ldots,0,0,\ldots\right).
\]
This limit is normalized if and only if $E=0$. In general, the missing
mass is the escape effect $E$, which moves farther out among the
terminal coordinates. 
\end{prop}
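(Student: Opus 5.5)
Everything reduces to a scalar statement about the polynomials $p_{m,j}$ on $[0,1]$, which we then lift to $E$ by the spectral theorem. Since $0\le E\le I$, the spectral measure of $E$ lives on $[0,1]$. For a fixed $j$, the functions $p_{m,j}$ with $m\ge j-1$ are uniformly bounded by $1$, because $0\le p_{m,j}(t)\le t\le1$ by \prettyref{prop:5-1}. So once we show $p_{m,j}(t)\to0$ pointwise on $[0,1]$, the bounded convergence theorem for the spectral integral gives $p_{m,j}(E)\xrightarrow{s}0$.

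The endpoints are handled separately.
\begin{itemize}
\item At $t=0$ every $p_{m,j}$ vanishes.
\item At $t=1$, we have $p_{m,1}(1)=1$. Then $1-p_{m,1}(1)=0$, so the recursion gives $p_{m+1,j}(1)=0$ for all $j\ge2$. Hence $p_{m,1}(1)=1$ does \emph{not} tend to $0$.
\end{itemize}
The $t=1$ case is the genuine obstacle. The spectral projection $P_{\ker(I-E)}$ survives in $p_{m,1}(E)=E^{m+1}\to P_{\ker(I-E)}$. So we must show this projection is zero, using that $B$ is collapsed. If $Ex=x$, then $\sum_k\langle x,B_kx\rangle=0$, so $B_kx=0$ for every $k$. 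By Definition \ref{def:4-1}, $\bigcap_k\ker B_k=\{0\}$, so $x=0$. Thus $\ker(I-E)=\{0\}$ and the spectral measure of $E$ gives no mass to $\{1\}$. It therefore suffices to prove pointwise convergence on $[0,1)$.

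For $0<t<1$ we argue by induction on $j$, using the structure of the recursion.
\begin{itemize}
\item For $j=1$: $p_{m,1}(t)=t^{m+1}\to0$.
\item For general $j$: every factor in $p_{m+1,j}(t)=t\prod_{\ell<j}(1-p_{m,\ell}(t))\,p_{m,j}(t)$ other than $t$ is at most $1$, so $p_{m+1,j}(t)\le t\,p_{m,j}(t)$. Iterating from $m=j-1$ gives $p_{m,j}(t)\le t^{m-j+1}$, which tends to $0$.
\end{itemize}
The same bound also gives uniform convergence on $[0,c]$ for every $c<1$.

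For finite-dimensional $H$, the spectrum of $E$ is a finite set contained in $[0,c]$ for some $c<1$, since $1$ is not an eigenvalue. Then $\|p_{m,j}(E)\|\le c^{m-j+1}\to0$, which is norm convergence.

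For the consequences: by \prettyref{prop:5-1}, the non-escape coordinates of $\Psi^m(B)$ are always $B_1,B_2,\ldots$, and each fixed terminal coordinate $p_{m,j}(E)$ tends strongly to $0$. This gives the coordinatewise limit $(B_1,B_2,\ldots,0,0,\ldots)$. Its total is $\sum_kB_k=I-E$, so the limit is normalized if and only if $E=0$, and the missing mass is $E$.
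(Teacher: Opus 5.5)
Your proof is correct and follows the paper's argument essentially step for step: you rule out spectral mass at $1$ by deriving $\ker(I-E)=\{0\}$ from the collapsed condition, bound $p_{m,j}(t)\le t^{m-j+1}$ via $p_{m+1,j}(t)\le t\,p_{m,j}(t)$, and conclude by bounded convergence in the spectral integral, with uniform convergence on $[0,c]$ giving the norm statement in finite dimension. The only differences are cosmetic: you make the obstruction at $t=1$ explicit through $p_{m,1}(1)=1$, and your estimate for general $j$ is direct rather than a genuine induction on $j$.
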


\begin{proof}
Since $B$ is collapsed, the support projections $S_{k}$ of the non-escape
coordinates satisfy $\sum_{k\geq1}S_{k}\overset{s}{=}I$. Hence 
\[
\bigcap_{k\geq1}\ker B_{k}=\left\{ 0\right\} .
\]
If $x\in\ker\left(I-E\right)$, then $\left\langle x,\left(I-E\right)x\right\rangle =0$.
Since $I-E=\sum_{k\geq1}B_{k}$ in the strong operator topology, 
\[
\sum_{k\geq1}\left\langle x,B_{k}x\right\rangle =0.
\]
Each summand is nonnegative, so $B_{k}x=0$ for every $k$. Thus $x=0$,
and 
\[
\ker\left(I-E\right)=\left\{ 0\right\} .
\]

We show that $p_{m,j}\left(t\right)\to0$ for each fixed $j\geq1$
and every $0\leq t<1$. For $j=1$, this is $p_{m,1}\left(t\right)=t^{m+1}$.
For $j\geq2$ and $m\geq j-1$, the recursion gives 
\[
0\leq p_{m+1,j}\left(t\right)=t\prod_{\ell<j}\left(1-p_{m,\ell}\left(t\right)\right)p_{m,j}\left(t\right)\leq tp_{m,j}\left(t\right).
\]
Therefore 
\[
0\leq p_{m,j}\left(t\right)\leq t^{m-j+1}p_{j-1,j}\left(t\right),
\]
which tends to $0$.

The spectral theorem now gives $p_{m,j}\left(E\right)\xrightarrow{s}0$.
Indeed, the functions $p_{m,j}$ are uniformly bounded by $1$ on
$\left[0,1\right]$, they converge pointwise to $0$ on $\left[0,1\right)$,
and $E$ has no spectral mass at $1$. If $H$ is finite-dimensional,
then $1\notin\sigma\left(E\right)$, so $\sigma\left(E\right)\subset\left[0,\rho\right]$
for some $\rho<1$. The scalar convergence is then uniform on $\sigma\left(E\right)$,
and 
\[
\left\Vert p_{m,j}\left(E\right)\right\Vert \to0.
\]

The coordinatewise limit follows from \prettyref{prop:5-1}. For every
$m$, 
\[
\sum^{m+1}_{j=1}p_{m,j}\left(E\right)=E.
\]
Thus every fixed terminal coordinate tends to $0$, while the total
terminal mass remains $E$. The coordinatewise limit has total mass
\[
\sum_{k\geq1}B_{k}=I-E,
\]
so it is normalized if and only if $E=0$. 
\end{proof}

\bibliographystyle{amsalpha}
\bibliography{ref}

\end{document}